\DeclarePairedDelimiter{\ceil}{\lceil}{\rceil}
\author{Felix Goldberg}
\address{Caesarea-Rothschild Institute, University of Haifa, Haifa, Israel}
\email{felix.goldberg@gmail.com}
\author{Deepak Rajendraprasad}
\address{Caesarea-Rothschild Institute, University of Haifa, Haifa, Israel}
\email{deepakmail@gmail.com}
\author{Rogers Mathew}
\address{Caesarea-Rothschild Institute, University of Haifa, Haifa, Israel}
\email{rogersmathew@gmail.com}
\title{Domination in designs}
\date{May 14, 2014}
\newtheorem{thm}{Theorem}[section]
\newtheorem{cor}[thm]{Corollary}
\newtheorem{lem}[thm]{Lemma}
\newtheorem{prop}[thm]{Proposition}
\newtheorem{prob}[thm]{Problem}
\newtheorem{conj}[thm]{Conjecture}
\newtheorem{defin}[thm]{Definition}
\newtheorem{expl}[thm]{Example}
\newtheorem{qstn}[thm]{Question}
\begin{document}

\begin{abstract}
We commence the study of domination in the incidence graphs of combinatorial designs. Let $D$ be a combinatorial design and denote by $\gamma(D)$ the domination number of the incidence (Levy) graph of $D$. We obtain a number of results about the domination numbers of various kinds of designs. 

For instance, a finite projective plane of order $n$, which is a symmetric $(n^{2}+n+1,n+1,1)$-design, has $\gamma=2n$. 
We study at depth the domination numbers of Steiner systems and in particular of Steiner triple systems. We show that a $STS(v)$ has $\gamma \geq \frac{2}{3}v-1$ and also obtain a number of upper bounds. The tantalizing conjecture that all Steiner triple systems on $v$ vertices have the same domination number is proposed and is verified up to $v \leq 15$.

The structure of minimal dominating sets is also investigated, both for its own sake and as a tool in deriving lower bounds on $\gamma$. Finally, a number of open questions are proposed.\end{abstract}

\subjclass{05C69,05B05,51E15,51E10}

\keywords{block design, domination number, independent domination, finite projective plane, Steiner triple system}

\thanks{{This research was supported by the Israel Science Foundation (grant number 862/10.)}}

\maketitle

\section{Introduction}
In this note we commence the study of the domination properties of the incidence graphs of combinatorial designs. Rather surprisingly, while both graph domination and combinatorial designs are widely and deeply studied, there have been virtually no attempts to marry the two subjects. The qualified exceptions to this statement are discussed in Section \ref{sec:lit}. 


\section{Definitions and basic facts}
Let $G=(V,E)$ be a graph. A set of vertices $S \subseteq V$ is said to be \emph{dominating} if for every vertex $u \in V \setminus S$ there is an edge $e=(u,s) \in E$ whose other end, $s$, belongs to $S$. The \emph{domination number} $\gamma(G)$ is the minimum cardinality of such a set. 

Let $D$ be a family of $k$-subsets of $X=\{x_{1},x_{2},\ldots,x_{v}\}$. The family $D$ is called a \emph{$(v,k,\lambda)$-design over $X$} if, for every two distinct elements $x_{i},x_{j}$ of $X$, there are exactly $\lambda$ sets in $D$ that contain both $x_{i}$ and $x_{j}$. A design is called \emph{non-trivial} if $k<v$. We will assume $\lambda>0$ throughout, to avoid pathological cases.

The elements of $X$ are called the \emph{points} of $D$ while the sets in $D$ are called \emph{blocks} and their number is traditionally denoted by $b$. We shall always assume that a design $D$ has parameters $(v,k,\lambda)$ and block number $b$, unless otherwise explicitly stated. A design with $\lambda=1$ will be called a \emph{Steiner design}. 

A \emph{Steiner triple system} is a $(v,3,1)$-design. We denote the set of Steiner triple systems with $v$ points by $STS(v)$.


Observe that our definition of a design allows for repeated blocks so that the family $D$ of blocks is a multiset rather than a set. Designs which have no repeated blocks are called \emph{simple}. 

\begin{prop}
All Steiner designs are simple.
\end{prop}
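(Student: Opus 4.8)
The plan is to proceed by contradiction. Suppose some Steiner design $D$ — that is, a $(v,k,1)$-design — fails to be simple, so that $D$ contains a repeated block; call it $B$, and note that $B$ occurs with multiplicity at least two in the multiset $D$. The strategy is to extract from this repetition a pair of points lying in at least two blocks, which directly violates the defining property $\lambda = 1$.

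First I would pin down that $k \geq 2$. Since we assume $\lambda = 1 > 0$, every pair of distinct points of $X$ lies in at least one block; as soon as $v \geq 2$ this forces each block — in particular $B$ — to contain at least two points, so $k \geq 2$. The remaining possibilities $v \leq 1$ or $k \leq 1$ are degenerate and excluded by the standing conventions: with $\lambda > 0$ and at least two points, a block of size at most $1$ could never cover a pair, so such cases do not arise.

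Now, with $k \geq 2$ in hand, pick two distinct points $x_i, x_j \in B$. Because $B$ appears at least twice in $D$, each of its occurrences is a block containing the pair $\{x_i, x_j\}$, so this pair is contained in at least two members of $D$. Hence the number of blocks through $x_i$ and $x_j$ is at least $2 > 1 = \lambda$, contradicting the hypothesis that $D$ is a $(v,k,1)$-design. Therefore no block can be repeated, and $D$ is simple.

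I do not expect any genuine obstacle here, as the argument is a direct unwinding of the definition of $\lambda$. The only step requiring a moment's care is the justification that $k \geq 2$, that is, ruling out the degenerate block sizes; this is precisely where the standing assumption $\lambda > 0$ is needed.
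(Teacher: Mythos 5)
Your proof is correct: a repeated block $B$ with $k\geq 2$ would give a pair of points lying in two blocks, contradicting $\lambda=1$. The paper states this proposition without proof, treating it as immediate from the definition, and your argument (including the care taken over the degenerate case $k\leq 1$) is exactly the intended one.
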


\begin{defin}
Let $D$ be a design with point set $X$ and block set $\mathbb{B}$. The \emph{incidence graph} of $D$ is defined by $G_{D}=(X \cup \mathbb{B},E)$ where $(x,B) \in E \Leftrightarrow x \in B$.
\end{defin}

Informally speaking, $G_{D}$ is a bipartite graph whose two classes of vertices represent the points and blocks of $D$ and the adjacencies on $G_{D}$ encode the incidence structure of $D$.

We shall henceforth slightly abuse notation and
refer to $\gamma(G_{D})$ as the \emph{domination number} of $D$ and denote it simply by $\gamma(D)$.



\subsection{Related work}\label{sec:lit}

Laskar and Wallis in \cite{LasWal99} have obtained some results about the domination number of the line graph of  $G_{D}$, motivated by combinatorial chessboard problems. However, the domination number of $G_{D}$ itself was not considered by them.

The recent thesis of H\'{e}ger \cite{Heger_thesis} is probably closeset in spirit to the present work: H\'{e}ger constructs  dominating sets of a particular form in projective planes. 



\section{Some general results}

It is a well-known fact that every element of $X$ appears in the same number $r$ of blocks. This number is called the \emph{replication number} of $D$ and satisfies the equation $$r=\lambda\frac{v-1}{k-1}.$$

\begin{lem}\cite[p. 306]{GraphsDigraphs}\label{lem:walikar}
Let $G$ be a graph on $n$ vertices with maximum degree $\Delta$. Then $\gamma(G) \geq \ceil{\frac{n}{\Delta+1}}$.
\end{lem}

Lemma \ref{lem:walikar} immediately establishes a lower bound on $\gamma(D)$:

\begin{thm}\label{thm:naive}
$$\gamma(D) \geq \ceil[\Big]{\frac{v+b}{r+1}}.$$
\end{thm}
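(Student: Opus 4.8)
The plan is to read off the relevant graph parameters of the incidence graph $G_{D}$ and then quote Lemma \ref{lem:walikar} more or less verbatim. The vertex set of $G_{D}$ is $X \cup \mathbb{B}$, so $G_{D}$ has exactly $n = v + b$ vertices. Its degrees come in two flavours: a point $x \in X$ is adjacent precisely to the blocks containing it, of which there are $r$ by the definition of the replication number, so $\deg(x) = r$; a block $B \in \mathbb{B}$ is adjacent precisely to the $k$ points it contains, so $\deg(B) = k$. Hence the maximum degree of $G_{D}$ is $\Delta = \max(k, r)$, and Lemma \ref{lem:walikar} immediately yields $\gamma(D) \geq \ceil{\frac{v+b}{\max(k,r)+1}}$.

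To upgrade this to the asserted bound with $r+1$ in the denominator, the one thing I must establish is that $r \geq k$, so that $\Delta = r$. This is really the only genuine content of the theorem; everything else is bookkeeping. I would obtain it from two standard facts about $(v,k,\lambda)$-designs. First, double-counting the incident point--block pairs (equivalently, the edges of $G_{D}$) gives the identity $bk = vr$, since summing degrees over the points contributes $vr$ while summing over the blocks contributes $bk$. Second, Fisher's inequality, applicable because $D$ is non-trivial ($k < v$) with $\lambda \geq 1$, guarantees $b \geq v$. Combining these, $vr = bk \geq vk$, and dividing through by $v > 0$ gives $r \geq k$.

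With $r \geq k$ in hand we have $\Delta = r$, and substituting this into the estimate coming from Lemma \ref{lem:walikar} produces $\gamma(D) \geq \ceil{\frac{v+b}{r+1}}$, as required. The step I expect to be the crux --- indeed the only place where the design structure is used beyond plain degree counting --- is the passage to $r \geq k$: the naive degree bound by itself only delivers the weaker estimate with $\max(k,r)$ in the denominator, and it is precisely Fisher's inequality that excludes the possibility $k > r$ and thereby sharpens the conclusion to the stated form.
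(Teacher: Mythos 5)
Your proposal is correct and follows exactly the paper's route: the paper simply applies Lemma \ref{lem:walikar} to $G_{D}$ with $n=v+b$ and $\Delta=r$. You additionally spell out why $\Delta=r$ rather than $\max(k,r)$ (via $bk=vr$ and Fisher's inequality), a detail the paper leaves implicit but which is a legitimate and correctly handled part of the same argument.
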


We can improve upon this bound by determining the \emph{fractional domination number} of $G_{D}$. Our argument applies in fact to every bipartite semiregular graph and is an easy variation on that of \cite[Theorem 7.4.1]{Fractional}.
\begin{thm}\label{thm:frac}
$$\gamma(D) \geq \gamma^{*}(D)=v\frac{r-1}{kr-1}+b\frac{k-1}{kr-1}.$$
\end{thm}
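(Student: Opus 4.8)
The plan is to establish the lower bound $\gamma(D) \geq \gamma^{*}(D)$ by first recalling that the fractional domination number is always a lower bound for the ordinary domination number (since any dominating set yields a feasible $0/1$ solution to the fractional relaxation), and then to compute $\gamma^{*}(D)$ exactly for the semiregular bipartite graph $G_{D}$. The fractional domination number is the optimal value of the linear program that minimizes $\sum_{u} f(u)$ subject to $f(u) + \sum_{w \sim u} f(w) \geq 1$ for every vertex $u$, with $f \geq 0$. By LP duality this equals the optimal value of the fractional \emph{total domination}-style packing dual, but the cleaner route is to directly produce a primal solution and a matching dual certificate so that weak duality pins down the exact value.

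The key structural fact I would exploit is that $G_{D}$ is bipartite and biregular: every point vertex has degree $r$ and every block vertex has degree $k$. This homogeneity strongly suggests a constant-on-each-side optimal solution. So first I would posit $f(x) = \alpha$ for all point vertices $x \in X$ and $f(B) = \beta$ for all block vertices $B \in \mathbb{B}$, and write down the two types of covering constraints. For a point vertex the constraint reads $\alpha + r\beta \geq 1$ (its $r$ incident blocks plus itself); for a block vertex it reads $\beta + k\alpha \geq 1$ (its $k$ incident points plus itself). The objective is $v\alpha + b\beta$, which I would minimize over the feasible region cut out by these two inequalities together with $\alpha,\beta \geq 0$. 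I expect the optimum to occur where both constraints are tight, giving the linear system $\alpha + r\beta = 1$, $k\alpha + \beta = 1$; solving yields $\alpha = \tfrac{r-1}{kr-1}$ and $\beta = \tfrac{k-1}{kr-1}$, and substituting into $v\alpha + b\beta$ reproduces the claimed expression exactly.

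To finish rigorously I must argue that this symmetric solution is genuinely optimal, not merely a good guess — and this is the step I expect to be the main obstacle. There are two standard ways to close the gap. The symmetrization argument: given any optimal fractional dominating function $f$, average it over the automorphisms of $G_{D}$ arising from the design's structure, or more robustly, average $f$ over each vertex class to produce a feasible function that is constant on each side with no larger objective value (convexity of the feasible region plus linearity of the objective guarantee the averaged function remains feasible and does not increase the cost). This reduces the general problem to the two-variable LP solved above, following the template of \cite[Theorem 7.4.1]{Fractional}. Alternatively, I would exhibit an explicit feasible solution to the LP dual (the fractional total-domination packing) achieving the same value, so that weak duality sandwiches $\gamma^{*}(D)$ from both sides. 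I would present the symmetrization version since it is the cleaner analogue of the cited fractional-domination argument; the only care needed is verifying that averaging over a vertex class genuinely preserves feasibility of every constraint, which holds precisely because biregularity makes each averaged constraint a convex combination of the original satisfied constraints.
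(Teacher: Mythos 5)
Your proposal is correct and matches the paper's proof in its essential content: the paper defines exactly the function you describe, namely $f\equiv\frac{r-1}{kr-1}$ on points and $f\equiv\frac{k-1}{kr-1}$ on blocks, verifies that $\sum_{u\in N[v]}f(u)=1$ for every vertex, and concludes by LP duality that $f$ is simultaneously a fractional dominating and a fractional packing function, hence attains $\gamma^{*}(G_{D})$ --- which is precisely the dual-certificate route you list as your alternative. Your preferred symmetrization argument also works (averaging over each side preserves feasibility because biregularity makes each averaged constraint a convex combination of satisfied ones), but it leaves you the extra chore of checking that the intersection of the two tight constraints beats the other vertices of the two-variable feasible region, a step the self-dual certificate sidesteps entirely since equality in every closed-neighborhood sum sandwiches the value from both sides at once.
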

\begin{proof}
Define the function $f:V(G_{D}) \rightarrow \mathbb{R}$ by assigning the value $\frac{r-1}{kr-1}$ to the points of $D$ and the value $\frac{k-1}{kr-1}$ to the blocks of $D$. It is easy to see that for every vertex $v \in V(G_{D})$ it holds that
$$
\sum_{u \in N[v]}{f(u)}=1.
$$
Thus, $f$ is both a fractional dominating and a fractional packing function and by linear programming duality, it attains the value of $\gamma^{*}(G_{D})$.
\end{proof}

The bound of Theorem \ref{thm:frac} is always at least as strong as that of Theorem \ref{thm:naive}. The two bounds coincide for symmetric designs and for them only. The difference between the two bounds is roughly $v(\frac{1}{k}-\frac{1}{r})$ which may sometimes be quite large. 
\begin{expl}
There are two known \cite{McKaySta78} non-isomorphic $(126,6,1)$-designs. For these designs we have $b=525,r=25$ and our two lower bounds take the values $26$ and $38$, respectively.
\end{expl}

The \emph{transversal number} $\tau(H)$ of a hypergraph $H$ is the minimum cardinality of a set of vertices which intersects all edges of $H$ (for graphs this concept reduces to the familiar one of a vertex cover). See, \cite{Fur88} for more on this parameter. Our next two results flesh out useful connections between $\tau$ and $\gamma$.

\begin{thm}\label{thm:tau}
$$
\tau \leq \gamma \leq \frac{v+\tau}{2}.
$$
\end{thm}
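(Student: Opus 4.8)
The plan is to prove the two inequalities separately, exploiting the bipartite structure of $G_D$, in which a block is adjacent only to the points it contains (so a chosen block can dominate itself and its own points, but never another block).

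For the lower bound $\tau \le \gamma$, I would start from a minimum dominating set $S$ and split it as $S = P \cup C$ with $P \subseteq X$ the chosen points and $C \subseteq \mathbb{B}$ the chosen blocks. Because a block $B$ can only be dominated by itself or by a point it contains, every block $B \notin C$ must meet $P$. Consequently, if from each block in $C$ that is disjoint from $P$ we select one of its points, then $P$ together with these selected points meets every block and is therefore a transversal; its size is at most $|P| + |C| = \gamma$. This gives $\tau \le \gamma$, the only thing to notice being that blocks of $C$ already met by $P$ cost nothing.

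For the upper bound I would fix a \emph{minimum} transversal $T$, so that $|T| = \tau$ and, being a transversal, $T$ dominates every block as well as every point of $T$ itself. The only still-undominated vertices are the points of $U := X \setminus T$, of which there are $v - \tau$. Here the standing assumption $\lambda > 0$ is the crucial lever: \emph{every} pair of distinct points lies in a common block, so the auxiliary graph on $U$ whose edges join points sharing a block is complete. I would therefore pair up the points of $U$ and, for each pair, throw into the dominating set a single block containing both points, which dominates that pair (indeed all of its points). Adding these $\lfloor (v-\tau)/2 \rfloor$ blocks to $T$ dominates everything except possibly one leftover point when $v-\tau$ is odd, yielding $\gamma \le \tau + \lceil (v-\tau)/2 \rceil = \lceil (v+\tau)/2 \rceil$.

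The main obstacle is precisely this parity issue: when $v - \tau$ is odd the naive pairing leaves one point $w \in U$ that must be dominated by a further vertex, and the clean estimate only gives the ceiling $\lceil (v+\tau)/2 \rceil$. To recover the stated bound one must dominate $w$ for free, for instance by choosing one of the pair-blocks to also contain $w$, or by covering three points of $U$ with a single block lying partly outside $T$; whether this is possible depends on $k$ and on how the blocks meet $U$ (note that for $k=3$ no block contains three points of $U$, since $T$ meets every block), so some care is needed in exactly this case. Everything else is routine once one observes that the co-block graph on $U$ is complete.
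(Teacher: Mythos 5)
Your proof follows the paper's argument exactly: the lower bound is obtained by replacing each chosen block of a minimum dominating set with one of its points to produce a transversal of size at most $\gamma$, and the upper bound by adjoining to a minimum transversal one block for each pair of uncovered points, using $\lambda>0$ to guarantee such a block exists. The parity issue you flag is genuine, but it is not yours alone: the paper's own proof simply asserts that the $v-\tau$ uncovered points ``can be covered by at most $\frac{v-\tau}{2}$ blocks,'' which as written only yields $\gamma \le \lceil (v+\tau)/2\rceil$ when $v-\tau$ is odd (and, as you observe, for $k=3$ no single block can absorb three uncovered points, since every block must meet the transversal). So your attempt matches the paper's proof step for step and is, if anything, more candid about the one place where care is needed.
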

\begin{proof}
In order to prove the lower bound let $S$ be a dominating set in $G_{D}$ of cardinality $\gamma=\gamma(D)$. Let $P$ denote the points in $S$ and $L$ the blocks in $S$. Now for every block $B \in L$ choose a point $p_{B} \in B$. Clearly, the set $P \cup \bigcup_{B \in L}{p_{B}}$ is a transversal of cardinality at most $|S|=\gamma$.

To prove the upper bound, let $X$ be a transversal of cardinality $\tau=\tau(D)$. For every two distinct points $x,y$ not in $X$ let $b_{xy}$ be a block which contains them. Clearly, all the points not in $X$ can be covered by at most $\frac{v-\tau}{2}$ blocks of the form $b_{xy}$. These blocks together with $X$ form a dominating set in $G_{D}$ with cardinality $\tau+\frac{v-\tau}{2}=\frac{v+\tau}{2}$.
\end{proof}

\begin{thm}\label{thm:trans2}
$$
\gamma \leq \tau +r.
$$
\end{thm}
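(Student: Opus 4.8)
The plan is to exhibit an explicit dominating set of cardinality $\tau + r$ by combining a minimum transversal with the pencil of all blocks through a single point. First I would take a transversal $T$ of $D$ of cardinality $\tau$. Regarded as a set of point-vertices of $G_D$, the set $T$ already dominates every block-vertex: by the definition of a transversal, $T$ meets every block $B$, so each such $B$ is adjacent in $G_D$ to some point of $T$. Hence the only vertices that $T$ might fail to dominate are point-vertices, and the whole task reduces to covering all the points cheaply.

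The crucial step is to cover every point using only $r$ additional blocks. Fix an arbitrary point $p$ of $D$ and let $\mathcal{P}_p$ be the set of the $r$ blocks incident with $p$ (there are exactly $r$ of them, by the definition of the replication number). I claim that, viewed as block-vertices of $G_D$, the set $\mathcal{P}_p$ dominates every point-vertex. Indeed, let $q$ be any point. If $q=p$, then $q$ lies on every block of $\mathcal{P}_p$. If $q\neq p$, then since $\lambda>0$ there is a block containing both $p$ and $q$; any such block contains $p$ and is therefore one of the $r$ blocks of $\mathcal{P}_p$, so $q$ is adjacent to it in $G_D$. Thus every point-vertex is dominated by $\mathcal{P}_p$.

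Combining the two observations, the set $T\cup\mathcal{P}_p$ dominates all block-vertices (through $T$) and all point-vertices (through $\mathcal{P}_p$), and since points and blocks lie in different parts of the bipartition its cardinality is at most $\tau+r$. As this is a dominating set of $G_D$, we conclude $\gamma\le\tau+r$.

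The only point requiring care, and the single place where the design axioms actually enter, is the assertion that the $r$ blocks through a fixed point already reach every other point; this is exactly where the standing hypothesis $\lambda>0$ is used, guaranteeing that each pair of points lies in a common block. Everything else is a routine check that both sides of the bipartition of $G_D$ are covered, so I do not anticipate any genuine obstacle.
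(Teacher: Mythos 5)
Your argument is correct and is essentially identical to the paper's proof: a minimum transversal dominates all block-vertices, and the pencil of $r$ blocks through a fixed point dominates all point-vertices since $\lambda>0$. No issues.
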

\begin{proof}
Let $T$ be a minimum transversal of $D$ and let $x$ be any point. Denote by $B(x)$ the family of blocks which contain $x$. We claim that $T \cup B(x)$ is a dominating set in $G_{D}$. Indeed, all blocks are dominated by $T$ and if $y$ is a point, then there is some $1 \leq i \leq r$ such that $\{x,y\} \subseteq B_{i} \in B(x)$. 
\end{proof}

Another concept which is of use to us is that of an \emph{independent set} in $D$: a set of points $S$ is independent if no block of $D$ is entirely contained in $S$. Clearly, if $X$ is the ground set of $D$, then $X \setminus S$ is a transversal of $D$. The maximum cardinality of an independent set is denoted $\beta(D)$ and we have $$\beta(D)=v-\tau(D).$$ 

Therefore the previous theorem has a 
\begin{cor}\label{cor:beta}
$$\gamma \leq v-\frac{\beta}{2}.$$
\end{cor}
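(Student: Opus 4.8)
The plan is to derive this bound directly from the upper half of Theorem~\ref{thm:tau} by translating the transversal number $\tau$ into the independence number $\beta$. The key ingredient is the identity $\beta(D)=v-\tau(D)$ recorded just above the statement, which rearranges to $\tau=v-\beta$. Since Theorem~\ref{thm:tau} already supplies $\gamma\leq\frac{v+\tau}{2}$, the entire argument reduces to a single substitution.

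Concretely, I would first invoke Theorem~\ref{thm:tau}, which gives $\gamma\leq\frac{v+\tau}{2}$. Next I would replace $\tau$ by $v-\beta$ and simplify, obtaining
$$
\gamma\leq\frac{v+(v-\beta)}{2}=\frac{2v-\beta}{2}=v-\frac{\beta}{2},
$$
which is exactly the claimed inequality.

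There is essentially no analytic obstacle here, since the corollary is a formal consequence of a result already proved. The one point demanding care is to anchor the argument on the correct antecedent: the bound draws on the \emph{upper} estimate of Theorem~\ref{thm:tau} rather than on the immediately preceding Theorem~\ref{thm:trans2}. Indeed, feeding $\tau=v-\beta$ into Theorem~\ref{thm:trans2} would only yield the weaker and differently shaped estimate $\gamma\leq v-\beta+r$, so the phrase ``the previous theorem'' must be understood as referring to Theorem~\ref{thm:tau}. Once that identification is made, the proof is complete in one line.
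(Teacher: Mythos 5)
Your proof is correct and matches the paper's intended (implicit) argument exactly: substitute $\tau=v-\beta$ into the upper bound $\gamma\leq\frac{v+\tau}{2}$ of Theorem~\ref{thm:tau}. Your observation that the paper's phrase ``the previous theorem'' must refer to Theorem~\ref{thm:tau} rather than Theorem~\ref{thm:trans2} is a valid and useful clarification, since only the former yields the stated bound.
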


We can now use a result of Grable, Phelps and R{\"o}dl \cite{GraPheRod95} (in fact, the special case $t=2$ of it) to obtain an upper bound on $\gamma(D)$.
\begin{thm}\cite[Theorem 2.3]{GraPheRod95}\label{thm:grable}
For fixed $k \geq 2$ and $\lambda$ there is a constant $c^{'}$ such that for large enough $v$ every $(v,k,l)$-design $D$ satisfies
$$
\beta(D) \geq c^{'} \cdot v^{\frac{k-2}{k-1}}(\ln{v})^{\frac{1}{k-1}}.
$$
\end{thm}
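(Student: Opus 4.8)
The statement is a lower bound on the independence number of the $k$-uniform hypergraph whose vertices are the points of $D$ and whose edges are the blocks; a set of points is independent in the sense defined above precisely when it is an independent set of this hypergraph. The plan is to exploit the two pieces of numerical data that a $(v,k,\lambda)$-design supplies for free: the hypergraph is $r$-regular with $r=\lambda\frac{v-1}{k-1}=\Theta(v)$, and it has $b=\lambda\binom{v}{2}/\binom{k}{2}=\Theta(v^{2})$ edges. Writing $b=\Theta(v\cdot t^{k-1})$ identifies the natural average-degree parameter $t=\Theta(v^{1/(k-1)})$, in terms of which such bounds are usually phrased.

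First I would record the baseline obtained by the deletion (alteration) method: choosing each point independently with probability $p$ and then deleting one point from every block that survives leaves an independent set of expected size $pv-bp^{k}$; optimizing at $p=\Theta((v/b)^{1/(k-1)})$ gives $\beta(D)\geq c\,(v^{k}/b)^{1/(k-1)}=\Theta(v^{(k-2)/(k-1)})$. This already matches the stated exponent but \emph{misses the logarithmic factor}, and recovering that factor is the entire content of the theorem. The first-moment method cannot produce it, so a more refined argument is required.

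The logarithmic gain is the signature of the semirandom (R\"odl nibble) method for independent sets in \emph{uncrowded} uniform hypergraphs, as in Ajtai--Koml\'os--Pintz--Spencer--Szemer\'edi and its descendants. Concretely, I would build the independent set in $\Theta(\log t)$ rounds, in each round selecting a sparse random subset of the currently surviving points, keeping those selected points that do not complete any block, and then discarding a controlled set of endangered points. The design's regularity makes the process pseudorandom: because every pair of points lies in exactly $\lambda$ blocks, degrees and pair-codegrees are uniform, so the number of surviving points and the number of still-active blocks evolve along predictable trajectories. Concentration around these trajectories would be established by a bounded-difference / martingale argument, and running the process for $\Theta(\log t)$ rounds upgrades the exponent-only bound to $\Theta\big((v/t)(\log t)^{1/(k-1)}\big)=\Theta\big(v^{(k-2)/(k-1)}(\ln v)^{1/(k-1)}\big)$, as claimed.

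The hard part is the uncrowdedness bookkeeping, and this is exactly where the general-$\lambda$ case is more delicate than the Steiner case. When $\lambda=1$ the hypergraph is \emph{linear}, two blocks meeting in at most one point, and the classical uncrowded-hypergraph theorems apply essentially verbatim. For $\lambda>1$ two blocks may share several points, so one must show such overlaps are rare: the number of ordered pairs of blocks meeting in at least two points is at most $\binom{\lambda}{2}\binom{v}{2}\cdot\mathrm{const}=O(v^{2})$, a lower-order defect relative to the $\Theta(v^{2})$ total. The plan would be to bound, more generally, the number of short configurations (pairs, cherries, and small cycles of blocks) that the nibble analysis treats as obstructions, and to argue that deleting a negligible fraction of points removes them, restoring near-linearity. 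Verifying that these defect counts stay negligible throughout all $\Theta(\log v)$ rounds, rather than accumulating, is the main obstacle, and it is the step for which the constant-codegree structure of a design, as opposed to a general sparse hypergraph, is indispensable.
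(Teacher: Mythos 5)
First, note that the paper does not prove this statement at all: Theorem \ref{thm:grable} is quoted verbatim from Grable, Phelps and R\"odl \cite[Theorem 2.3]{GraPheRod95} and used as a black box, its only role being to feed Corollary \ref{cor:beta} and yield Theorem \ref{thm:upper}. So there is no in-paper argument to compare yours against; what you have written is a reconstruction of the external source. Your identification of the method is accurate: the cited proof is indeed a semirandom (nibble) argument for independent sets in uncrowded uniform hypergraphs in the Ajtai--Koml\'os--Pintz--Spencer--Szemer\'edi / R\"odl line, and your deletion-method baseline, the parameter $t=\Theta(v^{1/(k-1)})$, and the $(\log t)^{1/(k-1)}$ upgrade all have the right shape.

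As a proof, however, your text has a genuine gap, and one quantitative claim in it is wrong. The gap: the entire content of the theorem is the concentration and uncrowdedness bookkeeping across the $\Theta(\log v)$ rounds, and you only name these steps (``would be established by a bounded-difference argument'', ``the plan would be to bound\dots''); nothing is verified. The quantitative slip: you assert that the number of pairs of blocks meeting in at least two points, which is at most $\binom{\lambda}{2}\binom{v}{2}=O(v^{2})$, is ``a lower-order defect relative to the $\Theta(v^{2})$ total''. But $b=\lambda v(v-1)/k(k-1)=\Theta(v^{2})$ as well, so these bad pairs are of the \emph{same} order as the number of blocks; a typical block participates in $\Theta(1)$ overlapping pairs. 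Consequently your proposed fix, deleting ``a negligible fraction of points'' to remove them, does not work as stated: there are $\Theta(v^{2})$ obstructing configurations among only $v$ points, so a greedy point-deletion could destroy everything. The standard repair --- and in effect what the cited source does --- is either to invoke an uncrowded-hypergraph theorem that tolerates a bounded number of $2$-cycles per edge (in the style of Duke--Lefmann--R\"odl), or to pass to a random induced subhypergraph in which the expected number of surviving bad configurations genuinely is negligible and run the nibble there. Until one of these is carried out, the $\lambda>1$ case is not established, and even for $\lambda=1$ you still owe the reader the statement and application of the specific uncrowded-hypergraph theorem you are leaning on.
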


\begin{thm}\label{thm:upper}
$$\gamma(D) \leq v-O(v^{\frac{k-2}{k-1}}(\ln{v})^{\frac{1}{k-1}}).$$
\end{thm}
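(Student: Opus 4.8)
The plan is to derive this bound as an immediate consequence of the two preceding results, with essentially no independent combinatorial work required. First I would invoke Corollary \ref{cor:beta}, which bounds the domination number from above in terms of the independence number: $\gamma(D) \leq v - \beta(D)/2$. Recall that this corollary already packages the relevant construction---a transversal of size $\tau = v - \beta$, augmented by at most $\beta/2$ blocks chosen so as to cover the remaining independent points in pairs, forms a dominating set---so all the structural content is inherited and need not be reproduced.

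Next I would substitute into this inequality the lower bound on $\beta(D)$ furnished by Theorem \ref{thm:grable}. Since that result guarantees, for fixed $k \geq 2$ and $\lambda$ and all sufficiently large $v$, that $\beta(D) \geq c' \cdot v^{\frac{k-2}{k-1}}(\ln v)^{\frac{1}{k-1}}$, the substitution gives
$$\gamma(D) \leq v - \tfrac{1}{2}\beta(D) \leq v - \tfrac{c'}{2}\, v^{\frac{k-2}{k-1}}(\ln v)^{\frac{1}{k-1}}.$$
Absorbing the constant factor $c'/2$ into the asymptotic notation then yields precisely the stated estimate.

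The only points requiring care---rather than a genuine obstacle---are bookkeeping ones. I would note that the hypotheses transfer verbatim: the bound is valid exactly in the regime where Theorem \ref{thm:grable} applies, namely fixed $k$ and $\lambda$ and large enough $v$. I would also remark that the $O$ appearing in the statement of Theorem \ref{thm:upper} is quantifying the magnitude of the term \emph{subtracted} from $v$, so it functions morally as an $\Omega$, and that the overall strength of the estimate is entirely inherited from the Grable--Phelps--R\"odl bound; any sharpening of the latter would propagate directly to a sharper upper bound on $\gamma(D)$.
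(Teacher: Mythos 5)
Your proposal is correct and follows exactly the paper's own route: the paper's proof is literally ``Apply Corollary \ref{cor:beta} and Theorem \ref{thm:grable},'' which is the combination you spell out. Your additional remarks about the hypotheses (fixed $k$, $\lambda$, large $v$) and the reading of the $O$-term are accurate clarifications of the same argument.
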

\begin{proof}
Apply Corollary \ref{cor:beta} and Theorem \ref{thm:grable}.
\end{proof}

The result of Theorem \ref{thm:upper} is probably close to the true state of things for designs with small block size. Indeed, for Steiner triple systems $(k=3)$ it is best possible, as discussed in Section \ref{sec:steiner}. On the other hand, for symmetric designs better bounds can be obtained from other considerations, as discussed in Section \ref{sec:symmetric}.






\section{The structure of dominating sets in $G_{D}$}
We now turn to analyze in more depth the structure of dominating sets in $G_{D}$. Special emphasis will be placed on Steiner designs.

For a subset $S \subseteq V(G_{D})$ we define $\pi(S)$ to be the set of points of $D$ which correspond to vertices in $S$. Given a set of points $P$ of $D$ we observe that the blocks are naturally partitioned into two sets - those which intersect $P$ and those which are disjoint from $P$. We shall denote these two sets by $L(P)$ and $\widehat{L}(P)$, respectively.

For example, the Fano plane consists of seven triples, listed here as rows:
$$
\left(\begin{array}{ccc}

1&2&3\\
1&4&5\\
1&6&7\\
2&4&6\\
2&5&7\\
3&4&7\\
3&5&6\\
\end{array}\right).
$$
If $P=\{2,5\}$ then $\widehat{L}(P)$ consists of the two blocks $\{1,6,7\}$ and $\{3,4,7\}$. 

The following observation is simple but extremely useful:
\begin{lem}\label{lem:hat}
Let $S$ be a dominating set in $G_{D}$ and $P=\pi(S)$. Then $\widehat{L}(P) \subseteq S$.
\end{lem}
\begin{proof}
The vertices corresponding to the blocks in $\widehat{L}(P)$ are not dominated by the vertices of $P$, by definition. Therefore, in order to be dominated they must be included in $S$ in their own right.
\end{proof}

Lemma \ref{lem:hat} prompts us to consider as possible candidates for minimal domination sets of a special kind to be defined now. If $P$ is a set of points in $D$, let
$$
I_{P}=P \cup \widehat{L}(P).
$$

\begin{defin}
Let $D$ be a design and let $S \subseteq V(G_{D})$ be a set. If $S=I_{P}$ for some set of points $P$ of $D$, we say that $S$ is a \emph{neat} set.
\end{defin}

In our previous example we have:
$$
I_{P}=\{2,5,167,347\}.
$$

The set of points obtained by deleting a single point from a block $B \in D$ will be called a \emph{punctured block}.

\begin{thm}\label{thm:noincid1}
Let $D$ be a Steiner design and let $P$ be a punctured block of $D$. Then $I_{P}$ is a dominating set of $D$.
\end{thm}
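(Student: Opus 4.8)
The plan is to verify the domination condition directly. Write $B = P \cup \{p\}$ for the block that $P$ punctures, so that $|P| = k-1$ and $p$ is the deleted point. I would show that every vertex of $G_D$ lying outside $I_P = P \cup \widehat{L}(P)$ has a neighbour in $I_P$. The block vertices are immediate: a block either belongs to $\widehat{L}(P)$, hence to $I_P$ itself, or else it meets $P$ and is therefore dominated by one of the points of $P \subseteq I_P$. Thus the entire content of the theorem concerns the point vertices, and among these only the points outside $P$ require attention. I would split these into the deleted point $p$ and the points $x \notin B$.

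For $p$, I would first observe that the Steiner condition $\lambda = 1$ forces every block through $p$ other than $B$ to be disjoint from $P$: a block containing $p$ together with some $q \in P$ must be the unique block on the pair $\{p,q\}$, which is $B$. Since the replication number satisfies $r \geq 2$ for a non-trivial design, at least one block through $p$ other than $B$ exists, lies in $\widehat{L}(P)$, and dominates $p$.

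For a point $x \notin B$, I would count, again using $\lambda = 1$, the blocks through $x$ that meet $P$. Each $q \in P$ lies with $x$ in a unique block, and no block through $x$ can contain two points of $P$ (that would force it to coincide with $B$, contradicting $x \notin B$); hence exactly $k-1$ of the $r$ blocks through $x$ meet $P$. It then remains to guarantee $r - (k-1) \geq 1$, i.e. $r \geq k$. This follows from Fisher's inequality $b \geq v$ specialised to Steiner designs, which gives $v \geq k^2 - k + 1$ and hence $r = (v-1)/(k-1) \geq k$; alternatively, one can prove $r \geq k$ directly by noting that the $k$ blocks joining $x$ to the $k$ points of $B$ are pairwise distinct. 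Any one of the remaining blocks through $x$ is disjoint from $P$, lies in $\widehat{L}(P)$, and dominates $x$.

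The main obstacle is exactly the inequality $r \geq k$ for the external points; the rest is bookkeeping. It is worth flagging that both the exact count of $k-1$ blocks meeting $P$ and the bound $r \geq k$ rest entirely on the hypothesis $\lambda = 1$, so this is precisely where the Steiner assumption is indispensable: for larger $\lambda$ a single block through $x$ could absorb several points of $P$, and the counting argument would break down. (The non-triviality $k < v$ is also used, via $r \geq 2$, and indeed the statement fails for the trivial design in which the single block equals $X$.)
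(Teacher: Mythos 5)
Your proof is correct, but it takes a more computational route than the paper's. The paper handles all points outside $P$ uniformly, with no case split and no counting: for any point $y \notin P$ other than the deleted point $p$, it takes the unique block $C$ containing both $y$ and $p$, and observes that $C$ must lie in $\widehat{L}(P)$ --- if some $a \in C \cap P$ existed, then $\{p,a\} \subseteq B \cap C$ together with $\lambda=1$ would force $C=B$, contradicting $y \notin B$. This single block then dominates $y$ and $p$ simultaneously, so the deleted point comes for free and the only structural input is $\lambda=1$. Your argument instead splits off $p$, and for an external point $x$ counts that exactly $k-1$ of the $r$ blocks through $x$ meet $P$, which obliges you to establish $r \geq k$ (true, via Fisher or your direct argument, but an extra step the paper never needs). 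What your version buys in exchange is sharper quantitative information --- exactly $r-k+1$ blocks through each external point lie in $\widehat{L}(P)$ --- and an explicit identification of where non-triviality enters; the paper's proof also silently requires $k < v$ (if $X \setminus P = \{p\}$ there is no auxiliary point $y$ and $p$ goes undominated), so your flagging of the trivial-design failure is a genuine, if minor, point in your favour.
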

\begin{proof}
Suppose that $P=B-\{x\}$ for some block $B$ of $D$. Since the blocks of $L(P)$ are dominated by $P \subseteq I_{P}$ and the blocks of $\widehat{L}(P)$ are included in $I_{P}$, we only need to show that all the points in $X \setminus P$ are dominated by $I_{P}$. 

Let $y \neq x$ be a point in $X \setminus P$. Then $x$ and $y$ belong together to some block $C$. If we can show that $C \in \widehat{L}(P)$, then both $y$ and $x$ are dominated by $I_{P}$. And indeed, if there is some point $a \in C \cap P$, then we see that points $x$ and $a$ belong together to both $B$ and $C$, which implies $B=C$. However, this is a contradiction as we now have $y \in B$.
\end{proof}

So far in this section we have been developing a way to construct dominating sets, yielding upper bounds on $\gamma(D)$. Now we take the opposite tack and investigate the structure of a given minimal dominating set, with a view to deriving lower bounds on $\gamma(D)$. 

\begin{lem}\label{lem:struc}
Let $S$ be a dominating set in $G_{D}$ and $P=\pi(S)$. Then $$|S| \geq \ceil[\Big]{\frac{v+|P|(k-1)}{k}}.$$
\end{lem}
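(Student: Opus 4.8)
The plan is to split $S$ into its point-part and block-part and to count how many blocks the domination requirement forces into $S$. Write $P = \pi(S)$ for the points of $S$ and let $L$ denote the blocks lying in $S$, so that $S = P \cup L$ and $|S| = |P| + |L|$. Clearing the denominator $k$ shows that the claimed bound is equivalent to $|L| \geq (v-|P|)/k$, so I would aim directly at bounding $|L|$ from below.

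The key observation is that $G_{D}$ is bipartite, with the points on one side and the blocks on the other, so a point-vertex is adjacent only to block-vertices. Consequently any point $y \in X \setminus P$, i.e. a point not itself in $S$, can be dominated only by a block of $S$, and hence $y$ lies in some block of $L$. This shows that the blocks of $L$ together cover every one of the $v - |P|$ points of $X \setminus P$.

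Each block contains exactly $k$ points and therefore covers at most $k$ of the points of $X \setminus P$. A routine covering count then gives $|L| \geq (v-|P|)/k$, and since $|L|$ is an integer we may round up. Adding $|P|$ to both sides yields $|S| = |P| + |L| \geq |P| + (v-|P|)/k = (v + |P|(k-1))/k$, and since $|S|$ is an integer the stated ceiling follows.

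There is essentially no obstacle here: the whole content is the remark that in a bipartite incidence graph points cannot dominate one another, which forces enough blocks into every dominating set. The only point requiring a moment's care is the interaction of the two roundings, namely that $|P| + \lceil (v-|P|)/k \rceil \geq \lceil (v + |P|(k-1))/k \rceil$; this holds because $|P|$ is an integer and $|P| + (v-|P|)/k$ equals $(v + |P|(k-1))/k$ exactly.
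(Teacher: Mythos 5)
Your proof is correct and follows essentially the same route as the paper's: the $v-|P|$ points outside $P$ must be dominated by blocks of $S$ (since the incidence graph is bipartite), each block covers at most $k$ of them, so $S$ contains at least $(v-|P|)/k$ blocks in addition to the $|P|$ points. Your extra care about the rounding is a harmless refinement; the paper simply notes $|S|$ is an integer at least $(v+|P|(k-1))/k$ and takes the ceiling.
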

\begin{proof}
There are $v-|P|$ points in $D$ which do not belong to $P$. These points must be dominated by blocks and since each block includes $k$ points, we need at least $\frac{v-|P|}{k}$ blocks to dominate them.
\end{proof}

\begin{lem}\label{lem:disjoint}
Let $D$ be a Steiner design over the ground set $X$ and let $x \in X$. Suppose that $x$ belongs to the $r$ blocks $B_{1},\ldots,B_{r}$. For $1 \leq i \leq r$ let $C_{i}=B_{i}-\{x\}$. Then the sets $C_{1},\ldots,C_{r}$ partition $X-\{x\}$.
\end{lem}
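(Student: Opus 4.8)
The plan is to verify the two defining properties of a partition separately --- that the sets $C_{1},\ldots,C_{r}$ cover $X-\{x\}$ and that they are pairwise disjoint --- both of which follow at once from the Steiner property $\lambda=1$, i.e.\ from the fact that every pair of distinct points of $D$ lies in exactly one block. Along the way I would also note that each $C_{i}\subseteq X-\{x\}$ by construction, so there is no question of the union overshooting $X-\{x\}$.

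First I would establish coverage. Fix any point $y\in X-\{x\}$, so $y\neq x$. Since $\lambda=1$ there is a (unique) block $B$ containing both $x$ and $y$. Because $B$ contains $x$, it must be one of the blocks through $x$, say $B=B_{i}$; and since $y\neq x$ we get $y\in B_{i}-\{x\}=C_{i}$. Hence every point of $X-\{x\}$ lies in some $C_{i}$, which together with the inclusions $C_{i}\subseteq X-\{x\}$ gives $\bigcup_{i=1}^{r} C_{i}=X-\{x\}$.

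Next I would establish disjointness. Suppose toward a contradiction that some point $y$ lies in $C_{i}\cap C_{j}$ with $i\neq j$. Then $y\in B_{i}$ and $y\in B_{j}$, while also $x\in B_{i}\cap B_{j}$, and moreover $y\neq x$ since $y\in C_{i}\subseteq X-\{x\}$. Thus the distinct pair $\{x,y\}$ is contained in both $B_{i}$ and $B_{j}$, which by $\lambda=1$ forces $B_{i}=B_{j}$, contradicting $i\neq j$. Therefore the sets $C_{i}$ are pairwise disjoint, and combined with the coverage step they form a partition of $X-\{x\}$.

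The argument is essentially immediate once the Steiner property is invoked, so I do not anticipate a genuine obstacle; the only point that warrants a moment's care is that uniqueness must be applied to the \emph{pair} $\{x,y\}$ rather than to $y$ alone, and it is precisely this single application that simultaneously yields both coverage and disjointness. As a consistency check one may observe that each $C_{i}$ is nonempty, with $|C_{i}|=k-1\geq 1$, so counting the points of $X-\{x\}$ across the partition recovers the replication number $r=(v-1)/(k-1)$ for a Steiner design.
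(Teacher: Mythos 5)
Your proof is correct. The disjointness step is exactly the paper's (both invoke $\lambda=1$ on the pair $\{x,y\}$), but you establish coverage differently: you argue directly that for each $y\neq x$ the unique block through $\{x,y\}$ must be one of the $B_i$, hence $y\in C_i$. The paper instead gets coverage by counting --- the $C_i$ are disjoint, each has $k-1$ elements, and $r(k-1)=v-1=|X-\{x\}|$, so the union cannot be a proper subset. Your route is self-contained in that it uses only the existence half of the Steiner property and does not need the replication identity $r=(v-1)/(k-1)$ (which you relegate, correctly, to a consistency check); the paper's route is terser but makes coverage logically dependent on disjointness plus the counting formula. Both are complete and valid.
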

\begin{proof}
The sets $C_{1},\ldots,C_{r}$ are disjoint because of $\lambda=1$. To see that they cover all of $X-\{x\}$ simply count:
$$
r(k-1)=v-1.
$$
\end{proof}

Suppose that $S \subseteq V(G)$ and let $v \in S$. A vertex $u \in V \setminus S$ so that $v$ is $u$'s only neighbor in $S$ will be called an \emph{external private neighbour} of $v$. 
\begin{lem}\cite[Proposition 6]{BolCock79}\label{lem:bolcock}
Let $G$ be a graph without isolated vertices. Then $G$ has a minimum dominating set $S$ in which every vertex has an external private neighbour.
\end{lem}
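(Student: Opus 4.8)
The plan is to run an extremal argument over the family of all minimum dominating sets of $G$. Among all of them I would select one, call it $S$, that maximises the number of edges of the induced subgraph $G[S]$, and then show that this particular $S$ already enjoys the desired property. The whole difficulty is to guess the right quantity to extremise; once ``number of internal edges'' is chosen, the swap argument below closes by itself.

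First I would record the standard fact that a minimum dominating set is also minimal, so no proper subset of $S$ dominates. Consequently, for each $v \in S$ the set $S \setminus \{v\}$ fails to dominate, which forces $v$ to possess a \emph{private neighbour} with respect to $S$, that is, a vertex $u \in N[v]$ whose only neighbour in $S$ (allowing $u$ to count itself) is $v$. Such a private neighbour is either $v$ itself, which happens exactly when $v$ has no neighbour lying in $S$, or else a vertex of $V \setminus S$ adjacent only to $v$ in $S$, i.e. an external private neighbour. This dichotomy is the engine of the argument.

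Next I would suppose for contradiction that some $v \in S$ has no external private neighbour. By the previous paragraph $v$ must then be its own private neighbour, so $v$ has no neighbour in $S$. Since $G$ has no isolated vertices, $v$ has some neighbour $w$, and because $v$ has no neighbour in $S$ we get $w \in V \setminus S$. The assumption that $v$ has no external private neighbour tells me further that $w$, being a neighbour of $v$ outside $S$, cannot be privately dominated by $v$, so $w$ has a neighbour in $S \setminus \{v\}$. I would then form the swapped set $S' = (S \setminus \{v\}) \cup \{w\}$ and verify that it is again a minimum dominating set: $v$ is now dominated by $w$; every other former neighbour of $v$ keeps an alternative dominator precisely because $v$ had no external private neighbour; all remaining vertices are untouched; and $|S'| = |S|$.

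The crux is to see that $S'$ strictly beats $S$ under the chosen potential. Deleting $v$ removes no edges from the induced subgraph, since $v$ had no neighbour in $S$, whereas inserting $w$ adds at least one edge, since $w$ has a neighbour in $S \setminus \{v\}$. Hence $G[S']$ has strictly more internal edges than $G[S]$, contradicting the extremal choice of $S$; therefore no such $v$ exists and every vertex of $S$ has an external private neighbour. The main obstacle, as noted, is entirely in the setup: a blind swap leaves the edge count ambiguous, and it is only the observation that the forbidden configuration guarantees a \emph{net gain} of at least one edge that makes the edge-maximising invariant the correct one. With that invariant fixed the contradiction is immediate and the lemma follows.
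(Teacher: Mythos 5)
Your argument is correct and complete. Note that the paper does not actually prove this statement; it is quoted verbatim from Bollob\'{a}s and Cockayne \cite[Proposition 6]{BolCock79} and used as a black box in the proof of Theorem \ref{thm:bol}, so there is no in-paper proof to compare against. Your extremal/exchange argument is the standard route to this result: every step checks out --- minimality of a minimum dominating set forces each $v \in S$ to own a private neighbour in $N[v]$; the absence of an external private neighbour forces $v$ to be isolated in $G[S]$; the swap $S' = (S \setminus \{v\}) \cup \{w\}$ is again dominating because every neighbour of $v$ other than $w$ is, by hypothesis, not privately dominated by $v$; and the edge count of the induced subgraph strictly increases since deleting $v$ costs nothing while inserting $w$ gains at least the edge from $w$ to its second dominator. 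One small remark: an equivalent (and perhaps slightly more transparent) potential is the number of isolated vertices of $G[S]$, which strictly decreases under the same swap ($v$ leaves as an isolated vertex, $w$ enters as a non-isolated one, and no vertex of $S \setminus \{v\}$ becomes newly isolated); either invariant closes the argument, and your choice is perfectly sound.
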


\begin{thm}\label{thm:bol}
Let $D$ be a Steiner design. Then $$\gamma(D) \geq \ceil[\Big]{\frac{2v}{k}}-1.$$
\end{thm}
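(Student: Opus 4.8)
The plan is to bound $|S|$ from below for an \emph{arbitrary} dominating set $S$ of $G_D$ in terms of the single parameter $p := |\pi(S)|$, and then minimise over $p$. Writing $P = \pi(S)$ and letting $\ell$ denote the number of blocks of $D$ lying in $S$, we have $|S| = p + \ell$. I expect to produce two lower bounds on $|S|$: one increasing in $p$ (hence useful when $p$ is large) and one decreasing in $p$ (useful when $p$ is small). The whole argument hinges on these two bounds crossing at $p = r-1$, where --- as the identity $(r-1)(k-1) = (v-1)-(k-1) = v-k$ reveals --- both evaluate to exactly $\frac{2v}{k}-1$.

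For the large-$p$ regime I would simply invoke Lemma~\ref{lem:struc}, which gives $|S| \ge \frac{v + p(k-1)}{k}$. This is increasing in $p$, so for $p \ge r-1$ it is at least $\frac{v + (r-1)(k-1)}{k} = \frac{v + (v-k)}{k} = \frac{2v}{k}-1$.

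The heart of the proof is the small-$p$ regime ($p \le r-1$), and this is where I would combine Lemma~\ref{lem:disjoint} with Lemma~\ref{lem:hat}. Fix any point $x \notin P$. By Lemma~\ref{lem:disjoint} the $r$ blocks through $x$ partition $X \setminus \{x\}$; since $P$ contains only $p$ points, at most $p$ of these blocks can meet $P$, so at least $r-p$ of them are disjoint from $P$ and therefore lie in $\widehat{L}(P)$. Counting incidences between the points outside $P$ and the blocks of $\widehat{L}(P)$ --- each such block being wholly contained in $X\setminus P$ and thus contributing $k$ --- yields
\[
(v-p)(r-p) \;\le\; \sum_{B \in \widehat{L}(P)} |B| \;=\; k\,\bigl|\widehat{L}(P)\bigr|.
\]
By Lemma~\ref{lem:hat} we have $\widehat{L}(P) \subseteq S$, so $\ell \ge |\widehat{L}(P)| \ge (v-p)(r-p)/k$, whence $|S| \ge p + (v-p)(r-p)/k$. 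I would then check that the right-hand side, as a function of $p$ on $[0,r-1]$, is minimised at the right endpoint $p = r-1$: multiplied by $k$ it is the upward parabola $p^2 - (v+r-k)p + vr$, whose vertex lies at $(v+r-k)/2 > r-1$, so the function is decreasing on the interval; and one computes its value at $p=r-1$ to be again exactly $\frac{2v}{k}-1$.

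Combining the two regimes shows $|S| \ge \frac{2v}{k}-1$ for every dominating set, and since $|S|$ is an integer this upgrades to $|S| \ge \bigl\lceil \frac{2v}{k}-1\bigr\rceil = \ceil{\frac{2v}{k}}-1$, giving the claim. I anticipate the main obstacle to be \emph{discovering} the small-$p$ bound $|\widehat{L}(P)| \ge (v-p)(r-p)/k$: the counting itself is short once one notices that Lemma~\ref{lem:disjoint} forces every point outside $P$ onto at least $r-p$ blocks disjoint from $P$. Notably this route does not appear to need the private-neighbour machinery of Lemma~\ref{lem:bolcock}; that lemma seems better suited to structural statements about minimal dominating sets than to this particular inequality.
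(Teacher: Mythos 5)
Your proof is correct, and it takes a genuinely different route from the paper's in the crucial small-$|P|$ regime. The paper does not split into two regimes at all: it invokes Lemma~\ref{lem:bolcock} to pick a minimum dominating set $S$ in which every vertex has an external private neighbour, takes a point $x\notin P$ dominated by some block $B\in S$, and uses an external private neighbour $y\in B\setminus P$ of $B$ --- the other $r-1$ blocks through $y$ avoid $S$, hence by Lemma~\ref{lem:hat} each meets $P$, and by Lemma~\ref{lem:disjoint} in distinct points --- to conclude $|P|\ge r-1$ outright; then Lemma~\ref{lem:struc} finishes exactly as in your large-$p$ branch. Your replacement for the private-neighbour step is a clean double count: every $x\notin P$ lies on at least $r-p$ blocks of $\widehat{L}(P)$ (by Lemma~\ref{lem:disjoint} plus $|P|=p$), each such block contributes $k$ incidences, so $|\widehat{L}(P)|\ge (v-p)(r-p)/k$ and $|S|\ge p+(v-p)(r-p)/k$ by Lemma~\ref{lem:hat}; your convexity check that this is decreasing on $[0,r-1]$ and equals $\frac{2v}{k}-1$ at $p=r-1$ is correct (the vertex $(v+r-k)/2$ indeed exceeds $r-1$ because $(r-1)(k-1)\ge r-1>r-2$). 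What your approach buys is that the bound holds for \emph{every} dominating set, with no appeal to the existence of a minimum dominating set with private neighbours; what the paper's approach buys is brevity once Lemma~\ref{lem:bolcock} is on the table, and a structural fact ($|P|\ge r-1$ for a well-chosen minimum dominating set) that is of independent interest. Both arguments land on the same endpoint computation $\frac{v+(r-1)(k-1)}{k}=\frac{2v}{k}-1$, and your final integrality step $\lceil\frac{2v}{k}-1\rceil=\lceil\frac{2v}{k}\rceil-1$ is fine.
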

\begin{proof}
Let $S$ be a minimum dominating set in $G_{D}$ as provided by Lemma \ref{lem:bolcock} and let $P=\pi(S)$. Suppose first that $P=X$. Since $\lambda=1$ we have that $|P|=|T|=v \geq r-1$. Therefore let us now assume that there is a point $x \notin P$. Since $S$ is dominating, there is some block $B \in S$ so that $x \in B$. 

The block $B$ has an external private neighbour $y \notin P$. Therefore all other $r-1$ blocks to which $y$ belongs are not in $S$. But by Lemma \ref{lem:hat} this means that each of these blocks contains a point from $P$. Lemma \ref{lem:disjoint} ensures that all these points are distinct and therefore we have established:
$$
|P| \geq r-1.
$$
The conclusion now follows by applying Lemma \ref{lem:struc} and keeping in mind that $r(k-1)=v-1$:
$$
|S| \geq \frac{v+|P|(k-1)}{k} \geq \frac{v+(r-1)(k-1)}{k}=\frac{2v}{k}-1.
$$
\end{proof}


\section{Finite projective planes}\label{sec:planes}
A finite projective plane of order $q$ is a $(q^2+q+1,q+1,1)$-design. In this case we have $r=q+1$ and $b=q^{2}+q+1$.
Note that as $\lambda=1$ the graph $G_{D}$ does not contain quadrangles and therefore its girth is at least six.

\begin{lem}\label{lem:bd}\cite{BriDut89}
Let $G$ be a connected graph of girth at least $6$ and with minimum degree $\delta$. Then $\gamma(G) \geq 2(\delta-1)$.
\end{lem}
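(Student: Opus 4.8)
The plan is to prove the lower bound by exhibiting $2(\delta-1)$ distinct vertices inside a minimum dominating set $S$. First I would dispose of the trivial case: if $\delta \le 1$ then $2(\delta-1) \le 0$ and there is nothing to prove, so assume $\delta \ge 2$. The main tool is Lemma~\ref{lem:bolcock}: fix a minimum dominating set $S$ in which every vertex has an external private neighbour. The purpose of this choice is that it upgrades the weak assertion ``$u$ is dominated'' into the strong assertion ``$u$ forces a previously uncounted vertex of $S$'', which is exactly what a sharp lower bound needs.

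The engine of the argument is a single neighbourhood expansion, which I would run twice. Pick $v \in S$ with external private neighbour $p$. Since $p \notin S$ and $v$ is its only neighbour in $S$, each of the $\ge \delta-1$ other neighbours $z_1, z_2, \ldots$ of $p$ lies outside $S$ and so must be dominated by some $s_i \in S$ adjacent to it. This is precisely where girth at least $6$ earns its keep: $s_i = v$ would create the triangle $v\,p\,z_i$ (and $s_i = p$ is impossible since $p \notin S$), while $s_i = s_j$ for $i \ne j$ would close the four-cycle $z_i\,s_i\,z_j\,p$. Hence the $s_i$ are pairwise distinct and avoid $\{v,p\}$, producing a family $F_1 \subseteq S$ of at least $\delta - 1$ dominators; a further four-cycle argument shows that no member of $F_1$ is adjacent to $v$. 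Together with $v$ itself this gives a first contribution $\{v\} \cup F_1$ of size at least $\delta$. The decisive feature is that the demand vertices $z_i$ all share the common neighbour $p$: this is what forces their dominators apart, whereas demands scattered across different branches of the graph could legitimately be served by one vertex of $S$ through an admissible six-cycle.

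To reach $2(\delta-1)$ I would extract a second contribution of size $\delta - 2$, disjoint from $\{v\} \cup F_1$, by analysing the remaining $\ge \delta-1$ neighbours of $v$ and the domination demands they generate, again using the absence of short cycles to force distinctness. I expect the \emph{disjointness} of the two contributions to be the main obstacle, and it is the step that genuinely uses girth $6$ rather than merely $5$: a single well-placed vertex of $S$ can dominate demand vertices lying in two different branches exactly by completing a legal six-cycle, so the bookkeeping must keep the demands within each family mutually close (sharing a common neighbour, as the $z_i$ do) to defeat this, yet keep the two families far enough apart to render their dominators disjoint. A useful discipline on the accounting is the known tightness of the bound for finite projective planes, where a minimum dominating set splits as $\delta-1$ points together with $\delta-1$ lines: the first contribution accounts for the seed plus its $\delta-1$ forced dominators, and the second supplies the remaining $\delta-2$, so that $\delta + (\delta-2) = 2(\delta-1)$ with no overshoot. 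Verifying that this split can always be arranged, and that the second family never collapses into $F_1$, is the crux of the proof.
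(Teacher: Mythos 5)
The paper offers no proof of this lemma: it is quoted verbatim from \cite{BriDut89}, so there is nothing internal to compare your argument against. Judged on its own terms, your proposal is a plan rather than a proof, and the gap sits exactly where you say it does. The first half is fine: taking $v\in S$ with external private neighbour $p$, the $\ge\delta-1$ other neighbours of $p$ lie outside $S$, and the triangle/four-cycle exclusions make their dominators pairwise distinct and distinct from $v$; this correctly yields $|S|\ge\delta$. But note that this step only uses girth $\ge 5$, so the entire content of the lemma --- the jump from $\delta$ to $2(\delta-1)$ --- is concentrated in the ``second contribution,'' which you do not construct. You explicitly defer the disjointness of the two families as ``the crux,'' which means the statement is not proved.

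There is also a concrete flaw in the plan for that second step. You propose to analyse ``the remaining $\ge\delta-1$ neighbours of $v$ and the domination demands they generate,'' but since $v\in S$, every neighbour of $v$ is already dominated by $v$ and generates no demand at all; no new vertex of $S$ is forced by them. The demands that can force new dominators live in the \emph{second} neighbourhood of $v$ (e.g.\ the neighbours of some $w\in N(v)\setminus S$ other than $v$ and $w$'s own dominator), and it is there that girth $6$ --- via the exclusion of $5$-cycles of the form $s$--$y$--$w$--$v$--$w'$--$s$ --- keeps the new dominators disjoint from the first family. Carrying this out requires a genuine case analysis (in particular the degenerate cases where $N(v)\setminus\{p\}\subseteq S$, or where the relevant branch vertices are themselves in $S$ and hence create no demands), none of which appears in the proposal. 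Until that second family of $\delta-2$ new vertices of $S$ is actually exhibited and its disjointness proved, the bound $\gamma(G)\ge 2(\delta-1)$ has not been established.
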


\begin{thm}\label{thm:pp}
Let $\Pi$ be a finite projective plane of order $q$. Then $\gamma(\Pi)=2q$.
\end{thm}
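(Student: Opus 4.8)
The plan is to establish the two inequalities $\gamma(\Pi) \geq 2q$ and $\gamma(\Pi) \leq 2q$ separately, drawing the lower bound from the girth machinery just assembled and the upper bound from the punctured-block construction.

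For the lower bound I would invoke Lemma \ref{lem:bd}. The incidence graph $G_{\Pi}$ is connected, and since $\lambda = 1$ it contains no quadrangle, so its girth is at least $6$. Moreover $G_{\Pi}$ is regular of degree $q+1$: every point lies on $r = q+1$ lines and every line carries $k = q+1$ points, so $\delta = q+1$. Lemma \ref{lem:bd} then yields $\gamma(\Pi) \geq 2(\delta - 1) = 2q$. (Alternatively, Theorem \ref{thm:bol} gives the same bound, since $2v/k = 2q + \frac{2}{q+1}$ and hence $\lceil 2v/k \rceil - 1 = 2q$ for $q \geq 2$.)

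For the upper bound I would exhibit a neat dominating set of size exactly $2q$. Fix a line $B$ and a point $x \in B$, and set $P = B - \{x\}$, a punctured block with $|P| = q$. By Theorem \ref{thm:noincid1}, $I_{P} = P \cup \widehat{L}(P)$ is a dominating set, so it suffices to show $|\widehat{L}(P)| = q$. This is where the symmetric structure of $\Pi$ enters: any two lines of a projective plane meet in exactly one point (the dual manifestation of $\lambda = 1$). Hence a line $C \neq B$ meets $B$ in a single point, and $C$ is disjoint from $P$ precisely when that point is $x$. The lines through $x$ other than $B$ number $r - 1 = q$, and every other line meets $B$ in a point of $P$; therefore $\widehat{L}(P)$ consists of exactly these $q$ lines. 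Since $P$ and $\widehat{L}(P)$ lie on opposite sides of the bipartition, $|I_{P}| = |P| + |\widehat{L}(P)| = 2q$, giving $\gamma(\Pi) \leq 2q$.

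The only real work is the count $|\widehat{L}(P)| = q$, and the crux there is the fact that two lines always intersect — an instance of the general principle that any two blocks of a symmetric $2$-design meet in exactly $\lambda$ points. Once that is in hand the two bounds coincide and the theorem follows.
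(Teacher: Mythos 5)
Your proof is correct and follows essentially the same route as the paper: the lower bound via Lemma \ref{lem:bd} applied to the $(q+1)$-regular, girth-$\geq 6$ incidence graph, and the upper bound via a punctured block $P$ together with Theorem \ref{thm:noincid1}. The only (cosmetic) difference is that you count $\widehat{L}(P)$ directly using the fact that two lines meet in exactly one point, whereas the paper counts $|L(P)|=(q+1)+(q-1)q=q^{2}+1$ and subtracts from $b$; both computations are valid.
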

\begin{proof}
Since the graph $G_{\Pi}$ is regular of degree $q+1$, the lower bound follows from Lemma \ref{lem:bd}. To obtain the upper bound, let $P$ be a punctured block, that is $P=B-\{x_{0}\}=\{x_{1},x_{2},\ldots,x_{q}\}$ for some block $B$. If we can show that $|\widehat{L}(P)|=q$ we will be done, by Theorem \ref{thm:noincid1}.

Let us count the number of lines in $L(P)$. First, there are $q+1$ lines incident with $x_{1}$, including $B$. For each of the other $(q-1)$ points in $P$, there are $q+1$ lines incident with it and exactly one of them is $B$ which has been already counted. No other double counting occurs because if a line $\ell$ contains both $x_{i}$ and $x_{j} (2 \leq i,j \leq q)$, then $\ell$ must be $B$ itself. Thus we have:
$$
|L(P)|=(q+1)+(q-1)q=q^{2}+1.
$$
Therefore $|\widehat{L}(P)|=q$.
\end{proof}

\section{Neat designs}\label{sec:struct}

\begin{defin}
\mbox{}
\begin{itemize}
\item
If $D$ has a neat dominating set of cardinality $\gamma(D)$, we say that $D$ is a \emph{neat design}.
\item
If all minimal dominating sets of $D$ are neat, we say that $D$ is a \emph{super-neat design}.
\end{itemize}
\end{defin}

Our proof of Theorem \ref{thm:pp} indicates that finite projective planes are neat. 

\begin{conj}
Finite projective planes are super-neat.
\end{conj}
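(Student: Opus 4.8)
The plan is to fix a minimal dominating set $S = P \cup L$, where $P = \pi(S)$ is the set of points of $S$ and $L = S \setminus P$ its set of lines, and to show that minimality forces $S$ to be neat. By Lemma \ref{lem:hat} we already know $\widehat{L}(P) \subseteq L$, so $S = I_P$ precisely when $L = \widehat{L}(P)$; equivalently, writing $t = \sum_{\ell \in L} |\ell \cap P|$ for the number of incidences between the chosen lines and the chosen points, $S$ is neat if and only if $t = 0$. Thus the whole conjecture reduces to the single assertion: in a minimal dominating set of $G_{\Pi}$, no chosen line passes through a chosen point.

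First I would settle the minimum-cardinality case, which already contains the main idea and, by Theorem \ref{thm:pp}, concerns sets with $|S| = 2q$. Writing $a = |P|$ and $c = |L|$, two dual double-counts pin down the parameters. Counting incidences between $P$ and the $v - c$ lines outside $L$ (each of which must meet $P$, since $S$ dominates) gives $t \le a(q+1) - (v - c)$; counting incidences between $L$ and the $v - a$ points outside $P$ (each of which must lie on a chosen line) gives $t \le c(q+1) - (v - a)$. Substituting $v = q^2 + q + 1$ and $a + c = 2q$, the two right-hand sides become $q(a - q + 1) - 1$ and $q(c - q + 1) - 1$ respectively; since $t \ge 0$ this forces $a \ge q$ and $c \ge q$, hence $a = c = q$ and $t \le q - 1$.

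To upgrade $t \le q-1$ to $t = 0$ I would analyse how the $q$ chosen lines cover points. Let $D(x)$ denote the number of lines of $L$ through a point $x$; then $\sum_x D(x) = q(q+1)$ and, since two lines of a projective plane meet in exactly one point, $\sum_x \binom{D(x)}{2} = \binom{q}{2}$. Because $S$ dominates, all $q^2 + 1$ points outside $P$ are covered, so the total number of distinct covered points is $q^2 + 1 + \rho$, where $\rho$ is the number of points of $P$ lying on a chosen line; comparing this with $\sum_x D(x)$ shows that the total excess $E = \sum_x (D(x) - 1)^+$ equals $q - 1 - \rho$. On the other hand, an elementary optimisation (the excess $\sum_x (D(x)-1)^+$ is minimised, for fixed total pair-count $\binom{q}{2}$, by concentrating incidences at a single point) shows that $q$ pairwise-intersecting lines have excess at least $q - 1$, with equality exactly when they are concurrent. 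Hence $q - 1 - \rho \ge q - 1$, forcing $\rho = 0$ and therefore $t = 0$: every minimum dominating set is neat, and in fact its $q$ lines form a pencil.

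The main obstacle is the passage from minimum to arbitrary inclusion-minimal dominating sets, where $|L|$ may exceed $q + 1$ and the clean concurrency lower bound on the excess breaks down, since no point lies on more than $q+1$ lines. Here I would bring in the private-neighbour structure guaranteed by minimality (cf. Lemma \ref{lem:bolcock}): every chosen line that meets $P$ must possess an external private neighbour point, and every chosen point must possess an external private neighbour line. The goal would be to show that these constraints force $L$ to decompose into a controlled union of pencils disjoint from $P$, so that the excess computation can be localised and an inequality of the same flavour again contradicts $t \ge 1$. Making this decomposition precise—and in particular ruling out the large, irregular minimal dominating sets that the counting alone cannot exclude—is exactly where I expect the real difficulty to lie, and is presumably the reason the statement is only conjectured.
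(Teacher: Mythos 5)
This statement is only a \emph{conjecture} in the paper, so there is no proof of it to compare against; I will assess your argument on its own merits. Your treatment of the minimum-cardinality case is correct and complete, and it is a genuine contribution. The two incidence counts do force $|P|=|L|=q$, and the excess computation then finishes the job: with $D(x)$ the number of lines of $L$ through $x$, you have $\sum_x D(x)=q(q+1)$ and $\sum_x\binom{D(x)}{2}=\binom{q}{2}$, and since $D(x)\le q$ the inequality $\binom{d}{2}\le\tfrac{q}{2}(d-1)$ for $1\le d\le q$ gives $\sum_{D(x)\ge 1}(D(x)-1)\ge q-1$, with equality iff the $q$ lines are concurrent. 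Combined with your identity $E=q-1-\rho$ this forces $\rho=0$, hence $t=0$, hence $S=I_P$; the equality analysis moreover shows $L$ is a pencil with one line removed and $P$ is the corresponding punctured line, so the minimum dominating sets are \emph{exactly} those constructed in Theorem~\ref{thm:pp}. If ``minimal'' in the definition of super-neat is read as ``of minimum cardinality'' --- which is how the paper uses the term in Example~\ref{expl:neat}, where only the $442$ dominating sets of cardinality $\gamma$ are inspected --- then your first two paragraphs already prove the conjecture in full.

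The extension you defer to the final paragraph, however, cannot be carried out: for inclusion-minimal dominating sets the statement is false. Take any line $B$ and any point $z\notin B$, and let $S$ consist of the $q+1$ points of $B$ together with the $q+1$ lines through $z$. Every line meets $B$, so all lines are dominated by $P=B$; every point lies on a line through $z$, so all points are dominated. Each point $x\in B$ is the sole dominator of the $q-1$ lines through $x$ other than $B$ and $xz$ (each such line meets $B$ only at $x$ and misses $z$), and each pencil line $\ell$ is the sole dominator of its $q-1$ points outside $B\cup\{z\}$; so for $q\ge 2$ no element of $S$ can be deleted, and $S$ is inclusion-minimal. Yet $\widehat{L}(B)=\emptyset$, so $I_B=B\ne S$ and $S$ is not neat. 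In the Fano plane this is the six-element set $\{1,2,3\}\cup\{167,257,347\}$, which incidentally shows that the paper's computer verification of super-neatness must also have been restricted to minimum dominating sets. So the ``real difficulty'' you anticipate is not a difficulty but an impossibility; the correct scope of the conjecture is minimum dominating sets, and there your counting argument settles it.
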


The \emph{independent domination number} $i(G)$ of the graph $G$ is the minimum cardinality of an independent dominating set of $G$. It is clear that $\gamma(G) \leq i(G)$. We can now offer an alternative characterization of a neat design:

\begin{prop}
Let $D$ be a design. Then $D$ is neat if and only if $\gamma(G_{D})=i(G_{D})$.
\end{prop}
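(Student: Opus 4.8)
The plan is to prove the biconditional by unpacking both the definition of a neat design and the definition of a neat set, and relating them to the equality $\gamma(G_D) = i(G_D)$. The key bridge I would establish first is the following structural fact: \emph{a neat set $I_P = P \cup \widehat{L}(P)$ that happens to be dominating is automatically an independent set in $G_D$.} Indeed, $G_D$ is bipartite with parts $X$ and $\mathbb{B}$, so the only possible edges within $I_P$ run between the points of $P$ and the blocks of $\widehat{L}(P)$. But by the very definition of $\widehat{L}(P)$, every block in $\widehat{L}(P)$ is disjoint from $P$, so no such edge exists. Hence any dominating neat set is an independent dominating set. This observation is what makes the forward direction almost immediate.

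For the forward direction ($D$ neat $\Rightarrow \gamma = i$), suppose $D$ has a neat dominating set $S = I_P$ with $|S| = \gamma(D)$. By the bridge fact, $S$ is independent, so it is an independent dominating set, whence $i(G_D) \leq |S| = \gamma(D)$. Since $\gamma(G_D) \leq i(G_D)$ always holds, we conclude $\gamma(G_D) = i(G_D)$.

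For the reverse direction ($\gamma = i \Rightarrow D$ neat), I would start from an independent dominating set $S$ of minimum cardinality, so $|S| = i(G_D) = \gamma(G_D) = \gamma(D)$; thus $S$ is in particular a minimum dominating set. The goal is to show $S$ is neat, i.e.\ that $S = I_{\pi(S)}$. Write $P = \pi(S)$ and let $L$ denote the blocks in $S$. Because $S$ is dominating, Lemma~\ref{lem:hat} gives $\widehat{L}(P) \subseteq S$, i.e.\ $\widehat{L}(P) \subseteq L$. For the reverse containment $L \subseteq \widehat{L}(P)$, I would use independence of $S$: if some block $B \in L$ intersected $P$, say at a point $p \in P \cap B$, then $p$ and $B$ would both lie in $S$ and be adjacent in $G_D$, contradicting that $S$ is independent. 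Hence every block of $L$ is disjoint from $P$, i.e.\ $L \subseteq \widehat{L}(P)$. Combining the two containments gives $L = \widehat{L}(P)$, so $S = P \cup L = P \cup \widehat{L}(P) = I_P$, and $S$ is neat. Since this minimum dominating set is neat, $D$ is a neat design.

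The argument is short and the main conceptual point is recognizing that bipartiteness plus the disjointness in the definition of $\widehat{L}(P)$ force neat-and-dominating to coincide with independent-and-dominating; once that is in hand, both directions reduce to counting cardinalities against the universal inequality $\gamma \leq i$. The only point requiring a little care is that in the reverse direction one must select the independent dominating set to have \emph{minimum} cardinality among all dominating sets (which is possible precisely because $\gamma = i$), so that the resulting neat set actually witnesses neatness by having cardinality $\gamma(D)$. I do not anticipate a serious obstacle, since no appeal to the Steiner condition $\lambda = 1$ is needed here; the result holds for arbitrary designs.
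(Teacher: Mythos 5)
Your proof is correct. The paper in fact states this proposition without any proof, and your argument is evidently the intended one: the key observation that a neat set $I_{P}=P\cup\widehat{L}(P)$ is automatically independent (since $G_{D}$ is bipartite and every block of $\widehat{L}(P)$ is by definition disjoint from $P$) gives the forward direction, while for the converse the combination of Lemma~\ref{lem:hat} with the independence of a minimum independent dominating set $S$ forces its block part to equal $\widehat{L}(\pi(S))$, so that $S=I_{\pi(S)}$ is neat and witnesses neatness of $D$ at cardinality $\gamma(D)$.
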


For a recent survey on $i(G)$ see \cite{GodHen13}. 


In the remainder of this section we exhibit a number of designs of varying degrees of neatness.

\begin{expl}[Super-neat designs]
Using a computer we have verified that the Fano plane and the affine plane of order $2$ (that is, the unique $(9,3,1)$-design) are super-neat.
\end{expl}

\begin{expl}[A neat design which is not super-neat]\label{expl:neat}
Let $D$ be the following $(8,4,3)$-design, drawn from \cite{SpenceDataNonSym}:
$$
D=
\left(\begin{array}{cccc}
1&2&3&4\\
1&2&3&5\\
1&2&6&7\\
1&3&6&8\\
1&4&5&6\\
1&4&7&8\\
1&5&7&8\\
2&3&7&8\\
2&4&5&7\\
2&4&6&8\\
2&5&6&8\\
3&4&5&8\\
3&4&6&7\\
3&5&6&7\\
\end{array}\right).
$$
We claim that $\gamma(D)=5$. This fact can be verified by brute force search but we can also give a nicer argument, based on our results so far. Observe first that $b=14$ and $r=7$. Now Theorem \ref{thm:frac} tells us that $\gamma(D) \geq 4$. Suppose that there is a dominating set $S$ of cardinality four. Then, by Lemma \ref{lem:struc} we see that $P=\pi(S)$ has at most two elements. 

We now show that this leads to contradiction. Indeed, if $|P|=0$, then $S$ consists of four blocks, but the other ten blocks of $D$ are not dominated. If $|P|=1$ then the single point in $S$ dominates $r=7$ blocks and at most three other blocks may be dominated as members of $S$. This still leaves at least four undominated blocks. Finally, if $|P|=2$ then the two points in $S$ dominate exactly $2r-\lambda=11$ blocks and at most two other blocks may be dominated as members of $S$. This still leaves one undominated block. Therefore $\gamma(D) \geq 5$.

At this stage we can begin to look for dominating sets of cardinality five. A computer search tells us that there $442$ such dominating sets of which only $14$ are neat. Here are specimens of the neat type:
$$
S_{1}=\{1,2,4,8,(3,5,6,7)\}=I_{\{1,2,4,8\}}
$$
and of the non-neat type:
$$
S_{2}=\{1,2,3,(1,2,6,7),(3,4,5,8)\}.
$$

\end{expl}

Observe now that from a given $(v,k,\lambda)$ design $D$ we can form a $(v,k,2\lambda)$-design $2D$ by including two copies of each block of $D$. We have $\gamma(2D) \geq \gamma(D)$ since a dominating set of $2D$, with possible duplicates removed, clearly dominates $D$.

\begin{expl}[A non-neat design]\label{expl:nonneat}
Let $D$ be the same design as in Example \ref{expl:neat}. We consider the $(8,4,6)$-design $2D$. The non-neat set $S_{2}$ of the previous example is dominating in $2D$ as well and therefore $\gamma(2D)=5$. However, any neat set in $2D$ must be of cardinality greater than $5$. 

Indeed, suppose that $S$ is such a set in $2D$. Unless it contains all eight points, it must contain some blocks. However, every block in $S$ appears twice, as there are two available copies of it. Therefore, we can remove the duplicates to obtain a smaller dominating set in $D$.
\end{expl}

We see from Example \ref{expl:nonneat} that multiple blocks may pose obstructions for neatness. Is this the only possible kind of obstruction? We conjecture so: 
\begin{conj}
Simple designs are neat.
\end{conj}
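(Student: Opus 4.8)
The plan is to reformulate neatness as a purely structural condition on minimum dominating sets and then establish that condition by an extremal exchange argument, with simplicity entering exactly where the exchange would otherwise fail.

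First, reformulate. Let $S$ be any minimum dominating set and put $P=\pi(S)$. Lemma~\ref{lem:hat} gives $\widehat{L}(P)\subseteq S$, and of course $P\subseteq S$, so $I_{P}\subseteq S$. The difference $S\setminus I_{P}$ consists precisely of the blocks of $S$ that meet $P$ (a block avoiding $P$ already lies in $\widehat{L}(P)$). Hence $S=I_{P}$ if and only if no block of $S$ meets $\pi(S)$, and consequently $D$ is neat if and only if some minimum dominating set $S$ has all of its blocks disjoint from $\pi(S)$. This is the statement I would try to prove.

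Second, the exchange. Among all minimum dominating sets choose one, $S$, maximizing $|\pi(S)|$ (equivalently, using the fewest blocks, since $|S|=\gamma$ is fixed). I claim such an $S$ ought to be forced to be neat. Suppose not: some block $B\in S$ meets $P=\pi(S)$. Since $S$ is minimum, $B$ is not redundant; as $B$ meets $P$ it remains dominated after its own removal, so the only obstruction to deleting $B$ is that $B$ is the \emph{sole} dominator of some point $z\in B\setminus P$ (equivalently, $B$ carries an external private neighbour in the sense of Lemma~\ref{lem:bolcock}). If $B$ solely dominates exactly one such point $z$, then $S'=(S\setminus\{B\})\cup\{z\}$ has the same cardinality and is again dominating (every block stays dominated because $B$ still meets $P$, and every point other than $z$ that $B$ used to cover was already covered elsewhere), while $|\pi(S')|=|P|+1$. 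This contradicts the maximality of $|\pi(S)|$, so no such $B$ can occur and $S$ is neat.

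The hard part, and where the conjecture really lives, is a block $B\in S$ meeting $P$ that is the sole dominator of two or more points $z_{1},z_{2},\ldots$ outside $P$. Now the naive swap replaces one block by several points and increases the cardinality. To repair this one must reroute: delete $B$ together with a witnessing point $x\in B\cap P$ and re-cover the orphaned points $z_{i}$ by blocks disjoint from $P$, so that the point count strictly increases while the size does not. This is exactly the step that simplicity must power. The non-neat design of Example~\ref{expl:nonneat} shows what goes wrong without it: there every candidate re-covering block is a repeated copy, so the reroute cannot produce a genuinely point-richer configuration of the same size, and no neat minimum dominating set exists. In a simple design the distinct blocks through each $z_{i}$ should supply enough independent $P$-avoiding capacity to absorb the orphans; quantifying this capacity—showing that simplicity guarantees one can always trade a multiply essential $P$-meeting block for points together with $P$-disjoint blocks without growing the set—is the crux and the main gap in turning this plan into a proof.
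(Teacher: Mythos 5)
The statement you are trying to prove is posed in the paper only as a conjecture; the paper contains no proof of it, so there is nothing to match your argument against. Judged on its own terms, your proposal is an honest partial reduction with a gap that you yourself flag, and that gap is not a technicality --- it is essentially the entire content of the conjecture. The parts you do carry out are correct: the reformulation (a dominating set $S$ equals $I_{\pi(S)}$ precisely when no block of $S$ meets $\pi(S)$, by Lemma~\ref{lem:hat}, so neatness means some minimum dominating set has this property --- equivalently $\gamma(G_{D})=i(G_{D})$, which is the paper's own characterization), and the exchange step in the case where a $P$-meeting block $B\in S$ has exactly one external private neighbour $z$: since blocks in $G_{D}$ are dominated only by points and $B$ itself stays dominated through $B\cap P$, the swap $S'=(S\setminus\{B\})\cup\{z\}$ is a minimum dominating set with $|\pi(S')|>|\pi(S)|$, contradicting your extremal choice.

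What remains unproved is the case of a $P$-meeting block $B\in S$ with two or more external private neighbours $z_{1},z_{2},\ldots\in B\setminus P$, and your ``reroute'' sketch does not close it: you would need to replace one block by several points plus some $P$-avoiding blocks without increasing $|S|$ and while strictly increasing the point count, and you give no mechanism by which simplicity guarantees such a trade. Note also that simplicity has not actually been used anywhere in the completed portion of your argument, which is a warning sign: Example~\ref{expl:nonneat} shows the conclusion is false without simplicity, so any correct proof must invoke it in an essential way, and your only invocation of it is in the unproved step. Until you can either rule out multiply-essential $P$-meeting blocks in an extremal minimum dominating set of a simple design, or exhibit a size-preserving, point-increasing exchange for them, this remains a plan rather than a proof --- consistent with the fact that the authors state it as an open conjecture.
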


\section{Steiner triple systems}\label{sec:steiner}

It is well-known that an $STS(v)$ exists iff $v \equiv_{6} 1$ or $v \equiv_{6} 3 $. In this case we have 
$$b=\frac{v(v-1)}{6}, r=\frac{v-1}{2}.$$

\begin{conj}\label{conj:sts}
There exists a function $f:\mathbb{N}\rightarrow\mathbb{N}$ so that if $D \in STS(v)$, then $\gamma(D)=f(v)$.
\end{conj}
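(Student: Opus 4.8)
The conjecture asserts only that $\gamma$ is \emph{constant} on $STS(v)$, so it suffices to name a single value $f(v)$ and prove both $\gamma(D)\ge f(v)$ and $\gamma(D)\le f(v)$ for every $D\in STS(v)$. The lower half is already uniform: Theorem \ref{thm:bol} gives $\gamma(D)\ge\ceil{2v/3}-1$ for all $D$, with no reference to the particular system. The natural guess is therefore $f(v)=\ceil{2v/3}-1$ (this matches Theorem \ref{thm:pp} at $v=7$, and the uniqueness of the system forces it for $v\le 9$), so the entire burden shifts to proving the matching \emph{universal} upper bound $\gamma(D)\le\ceil{2v/3}-1$.

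To design such a bound I would reverse-engineer the equality case of Theorem \ref{thm:bol}. Chasing the inequalities $|S|\ge(v+2|P|)/3$ and $|P|\ge r-1$ through the ceiling shows that equality forces $|P|=r-1=(v-3)/2$ exactly, together with $(v+3)/6$ blocks (resp.\ $(v+5)/6$ when $v\equiv_6 1$) that cover the $(v+3)/2$ points of $X\setminus P$ with essentially no overlap; since each such block lies entirely in $X\setminus P$, it must belong to $\widehat{L}(P)$. Thus I would hunt, in an arbitrary $STS(v)$, for a point set $P$ with $|P|=(v-3)/2$ whose complement is (nearly) partitioned by $\widehat{L}(P)$ into a (near-)parallel class, while every remaining block meets $P$. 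A concrete launching point is Lemma \ref{lem:disjoint}: fix $x$, let $B_1,\dots,B_r$ be the blocks through $x$, and build $P$ by selecting points from the punctured blocks $C_i=B_i-\{x\}$ so that exactly one near-parallel class survives in $\widehat{L}(P)$. If such a $P$ exists in every system, then $I_P$ together with that class is a dominating set of the required size, and with Theorem \ref{thm:bol} this pins $\gamma(D)$ to $\ceil{2v/3}-1$ for all $D$.

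The hard part is precisely the existence of this $P$ in an \emph{arbitrary} $STS(v)$. The property ``some small point set meets every block except for a parallel class'' cannot be read off the parameters $(v,3,1)$, and the members of $STS(v)$ are genuinely heterogeneous: some are resolvable and some are not, some contain subsystems while others are anti-Pasch, and their independence numbers $\beta(D)$ differ. In particular every construction keyed to $\beta$, such as Corollary \ref{cor:beta}, yields a system-dependent bound and so cannot establish a uniform value; and if it should turn out that the lower bound of Theorem \ref{thm:bol} is \emph{not} attained by every system, the target $f(v)$ must be raised and the analysis becomes correspondingly harder. Two routes seem plausible: prove the universal covering statement above using only the design axioms together with a counting or probabilistic argument, or show directly that $\gamma$ is invariant under the trades (Pasch switches and their generalizations) that connect all members of $STS(v)$. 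Either route appears to demand a genuinely new idea, which is why we state the assertion as a conjecture and, for the present, only verify it by computer for $v\le 15$.
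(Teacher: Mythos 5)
This statement is a conjecture; the paper offers no proof of it, only the computational verification recorded in the table for $v\le 15$, and you correctly end by conceding that the assertion must remain a conjecture. In that sense your write-up is honest about its own status. However, your concrete candidate $f(v)=\ceil{2v/3}-1$ is already refuted by the paper's own data: the table gives $\gamma=9$ for both systems in $STS(13)$, whereas $\ceil{26/3}-1=8$, and $\gamma=10$ for all eighty systems in $STS(15)$, whereas $\ceil{30/3}-1=9$. So the lower bound of Theorem \ref{thm:bol} is \emph{not} attained beyond the two trivial cases $v=7,9$, and the entire strategy of reverse-engineering its equality case --- hunting for a point set $P$ with $|P|=r-1$ whose complement is covered by a near-parallel class in $\widehat{L}(P)$ --- is aimed at a target that the known values show cannot be hit. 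You do hedge against exactly this possibility in your last paragraph, but the hedge swallows the proposal: once the target value is unknown, neither of your two suggested routes (a universal covering argument from the design axioms, or invariance of $\gamma$ under Pasch trades, which is precisely the paper's own open Question in Section \ref{sec:steiner}) is carried out or even started. The net content of your proposal is therefore a plausibility discussion rather than a proof, which is consistent with the paper but adds a specific numerical guess that the paper's table disproves; you should check candidate formulas against the $v=13$ and $v=15$ computations before anchoring an argument to them.
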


In other words, we conjecture that the domination number of a Steiner triple system does not depend on the structure of the system, but only on its order. 

What we know so far about the domination numbers of small Steiner triple systems can be summarized in a table:

\medskip

\begin{tabular}{|c|l|l|}

\hline

$v$\ & \# non-isomorphic $STS(v)$ \ & $\gamma$ \ \\ [0.5ex] 

\hline

7 & 1 & 4\\
9 & 1 & 5\\
13 & 2 & 9\\
15 & 80 & 10\\

\hline
\end{tabular}

\medskip


The computations for $v=15$ were performed by Gordon Royle \cite{RoyleSTS15}.

We now present an array of lower and upper bounds on $\gamma$ for Steiner triple systems.

\begin{thm}\label{thm:23}
Let $D \in STS(v)$. Then $$\gamma(D) \geq \ceil[\Big]{\frac{2v}{3}}-1.$$
\end{thm}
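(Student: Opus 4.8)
The plan is to recognize that this statement is simply the special case $k=3$ of Theorem \ref{thm:bol}. By definition an $STS(v)$ is a $(v,3,1)$-design, hence a Steiner design with block size $k=3$. Substituting $k=3$ into the conclusion $\gamma(D) \geq \lceil 2v/3\rceil - 1$ of Theorem \ref{thm:bol} (recall $\lceil 2v/k\rceil - 1$) already produces the desired inequality verbatim. So the shortest honest proof is one line: apply Theorem \ref{thm:bol} with $k=3$.

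Should a self-contained argument be preferred, I would replay the proof of Theorem \ref{thm:bol} in the concrete language of triples. First I would invoke Lemma \ref{lem:bolcock} to fix a minimum dominating set $S$ in which every vertex has an external private neighbour, and set $P = \pi(S)$. If $P = X$ then $|P| = v$, which already exceeds $r-1 = (v-3)/2$, so the bound of Lemma \ref{lem:struc} finishes immediately. Otherwise I would choose a point $x \notin P$, find a block $B \in S$ containing it (guaranteed since $S$ dominates), and let $y \notin P$ be an external private neighbour of $B$; because $B$ is the \emph{only} neighbour of $y$ in $S$, none of the other $r-1$ triples through $y$ can lie in $S$.

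The crux — and the only place where genuine work happens — is establishing $|P| \geq r-1$. Here I would combine Lemma \ref{lem:hat}, which forces each of those $r-1$ triples to meet $P$ (otherwise it would belong to $\widehat{L}(P) \subseteq S$), with Lemma \ref{lem:disjoint}, which says the punctured triples $B_i - \{y\}$ partition $X - \{y\}$ and are hence pairwise disjoint; consequently the points of $P$ contributed by distinct triples are themselves distinct, yielding $r-1$ of them. Feeding $|P| \geq r-1$ together with $r(k-1) = v-1$ into Lemma \ref{lem:struc} at $k=3$ gives $|S| \geq (v + 2(r-1))/3 = 2v/3 - 1$, and the ceiling in Lemma \ref{lem:struc} (equivalently, integrality of $|S|$) promotes this to $\lceil 2v/3\rceil - 1$. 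The main obstacle, such as it is, is thus purely the disjointness bookkeeping behind $|P| \geq r-1$; every other step is a direct substitution, which is exactly why I would ultimately prefer to state the result as an immediate corollary of Theorem \ref{thm:bol}.
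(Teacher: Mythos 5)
Your proposal is correct and matches the paper exactly: the paper's entire proof of this theorem is ``Take $k=3$ in Theorem \ref{thm:bol}.'' Your optional self-contained unwinding also faithfully reproduces the argument the paper gives for Theorem \ref{thm:bol} itself (private neighbour via Lemma \ref{lem:bolcock}, then $|P|\geq r-1$ via Lemmas \ref{lem:hat} and \ref{lem:disjoint}, then Lemma \ref{lem:struc}).
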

\begin{proof}
Take $k=3$ in Theorem \ref{thm:bol}.
\end{proof}

\begin{prop}
Let $D \in STS(19)$. Then $12 \leq \gamma(D) \leq 15$.
\end{prop}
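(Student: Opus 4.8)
The plan is to establish the two bounds separately, using the general machinery developed earlier in the paper specialized to $v=19$, $k=3$, where one computes $r = \frac{v-1}{2} = 9$ and $b = \frac{v(v-1)}{6} = 57$.

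For the lower bound $\gamma(D) \geq 12$, I would first apply Theorem~\ref{thm:23}, which gives $\gamma(D) \geq \ceil{\frac{2 \cdot 19}{3}} - 1 = \ceil{\frac{38}{3}} - 1 = 13 - 1 = 12$. This yields the claimed lower bound directly, with no further work. (It is worth noting that Theorem~\ref{thm:frac} alone would give only $\gamma^{*}(D) = 19 \cdot \frac{8}{26} + 57 \cdot \frac{2}{26} = \frac{152 + 114}{26} = \frac{266}{26} \approx 10.2$, hence $\gamma(D) \geq 11$, so the Bollob\'{a}s--Cockayne-style argument of Theorem~\ref{thm:bol} is the one that delivers the sharper constant.)

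For the upper bound $\gamma(D) \leq 15$, I would use Corollary~\ref{cor:beta}, namely $\gamma \leq v - \frac{\beta}{2}$. To reach $15 = 19 - 2$ this requires exhibiting an independent set of size $\beta(D) \geq 8$, i.e.\ a set of $8$ points containing no full block. Equivalently, by the identity $\beta = v - \tau$, it suffices to find a transversal of size at most $11$. The natural approach is to show that \emph{every} $STS(19)$ contains an independent set (a \emph{cap}, in the geometric language) of size $8$. Since the complement of such a set meets every block, an $8$-point independent set gives a transversal of size $11$, and then Theorem~\ref{thm:tau}'s upper bound $\gamma \leq \frac{v+\tau}{2} = \frac{19+11}{2} = 15$ closes the argument (equivalently, one covers the $11$ uncovered points by at most $\lceil \frac{19-11}{2}\rceil = 4$ blocks, giving $11 + 4 = 15$).

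The main obstacle will be verifying that the independence (or transversal) bound holds \emph{uniformly} over all Steiner triple systems of order $19$, rather than for a single convenient one; the number of non-isomorphic $STS(19)$ is astronomically large ($11{,}084{,}874{,}829$), so an exhaustive check is infeasible and one needs a structural guarantee. I would attack this by a greedy or probabilistic argument: repeatedly pick points avoiding the completion of any block, showing that a set of $8$ independent points can always be extracted. Since each added point rules out at most a controlled number of continuations, and $8$ is well below the asymptotic independence threshold $\beta \gtrsim v^{1/2}(\ln v)^{1/2}$ from Theorem~\ref{thm:grable} once $v$ is moderately large, the existence of such a set for every $STS(19)$ should follow from counting how many pairs among the chosen points can force a third; the delicate part is making the greedy bound tight enough to reach exactly $8$ without appealing to asymptotics that may not yet have kicked in at $v=19$.
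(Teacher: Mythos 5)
Your lower bound is exactly the paper's: apply Theorem~\ref{thm:23} with $v=19$ to get $\gamma(D)\geq\ceil{38/3}-1=12$. That half is complete and correct, and your remark that the fractional bound of Theorem~\ref{thm:frac} only gives $11$ is accurate.

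The upper bound is where the gap lies. The paper also goes through Corollary~\ref{cor:beta}, but it does not attempt to prove the independence bound itself: it cites the known fact that every $STS(19)$ has $\beta(D)\geq 7$ (\cite[p.~306]{Triples}), whence $\gamma\leq 19-7/2=15.5$ and so $\gamma\leq 15$ by integrality. You instead undertake to prove $\beta(D)\geq 8$ for \emph{every} $STS(19)$ by a greedy argument, and that step fails. An independent set of size $i$ forbids at most $\binom{i}{2}$ further points (one per pair), so greedy extension is guaranteed only while $i+\binom{i}{2}<v$; for $v=19$ this already breaks at $i=6$ since $6+\binom{6}{2}=21>19$, so the naive greedy argument certifies only $\beta\geq 6$, short even of $7$, let alone $8$. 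The asymptotic bound of Theorem~\ref{thm:grable} has an unspecified constant and threshold and gives nothing effective at $v=19$. You acknowledge this yourself (``the delicate part is making the greedy bound tight enough to reach exactly $8$''), but that delicate part is the entire content of the upper bound and is not supplied. To your credit, your bookkeeping is arguably more careful than the paper's: with $\beta=7$, i.e.\ $\tau=12$, the $7$ uncovered points form an independent set, so no block covers three of them and the construction behind Theorem~\ref{thm:tau} needs $\ceil{7/2}=4$ blocks, yielding $16$ rather than $15$; so insisting on $\beta\geq 8$ (or some extra idea) is a legitimate concern about the rounding in Theorem~\ref{thm:tau}. But that observation only makes the unproved independence claim more essential. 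The repair is to quote the known independence (equivalently, transversal) bound for $STS(19)$ from the design-theory literature rather than try to rederive it greedily.
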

\begin{proof}
The lower bound follows from Theorem \ref{thm:23}. The upper bound follows from Corollary \ref{cor:beta} and the fact that $\beta(D) \geq 7$ (cf. \cite[p. 306]{Triples}).
\end{proof}

Our next result is a negative one - we show that a linear upper bound is impossible. 
\begin{thm}\label{thm:linear}
Let $c \in (0,1)$ be a constant. Then for sufficiently large $v$ there is a $D \in STS(v)$ so that $\gamma(D) \geq cv$.
\end{thm}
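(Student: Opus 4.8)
The plan is to reduce the statement to a known extremal property of Steiner triple systems, namely the existence of systems with small independence number. The lower bound in Theorem \ref{thm:tau} gives $\gamma(D) \geq \tau(D)$, and since $\beta(D) = v - \tau(D)$ we obtain $\gamma(D) \geq v - \beta(D)$. Consequently, to force $\gamma(D) \geq cv$ it suffices to produce, for large $v$, a system $D \in STS(v)$ whose point set contains no independent set larger than $(1-c)v$: indeed $\beta(D) \leq (1-c)v$ immediately yields $\gamma(D) \geq v - (1-c)v = cv$. Thus the domination problem is converted into the task of building Steiner triple systems with small $\beta$ (equivalently, large transversal number, or large chromatic number). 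Note that Theorem \ref{thm:grable} shows $\beta$ can never be pushed below order $\sqrt{v \ln v}$, so the target is merely a $\beta$ that is $o(v)$, which lies well above that floor and is hence plausible.

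For the cleanest instance I would use affine geometry. Take $D = AG(n,3)$: the points are the $3^n$ vectors of $\mathbb{F}_3^{\,n}$ and the blocks are the affine lines, i.e.\ the collinear triples. Since two points determine a unique line, this is an $STS(v)$ with $v = 3^n$ (and $3^n \equiv 3 \pmod 6$, so the order is admissible). Here a set of points is independent exactly when it is a \emph{cap}, a set containing no affine line. By the Roth--Meshulam bound a cap in $\mathbb{F}_3^{\,n}$ has size $O(3^n/n)$, so $\beta(D) = O(v/\log v) = o(v)$. Therefore $\gamma(D) \geq v - \beta(D) = (1-o(1))v$, which exceeds $cv$ as soon as $n$ is large. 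This already refutes any universal linear upper bound of slope less than one, along the sequence $v = 3^n$.

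The remaining difficulty---and the step I expect to be the main obstacle---is to obtain the conclusion for \emph{every} sufficiently large admissible $v$, rather than only for the powers of three supplied by the geometric construction. The reduction of the first paragraph still applies, so what is needed is an $STS(v)$ with $\beta(D) = o(v)$ for all admissible $v$. I would pursue this through the chromatic number: a system with $\chi(D) \to \infty$ automatically has $\beta(D) \leq v/\chi(D) = o(v)$, and it is known that Steiner triple systems of unboundedly large chromatic number exist throughout the admissible spectrum. An alternative is a direct probabilistic estimate showing that a random $STS(v)$ has $\beta(D) = O(\sqrt{v \log v})$, or a recursive construction embedding a sparse sub-system into a full $STS(v)$. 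The delicate point in each route is controlling the independence number uniformly in $v$: recursive doubling and product constructions tend to glue together the independent sets of the constituent systems and thereby inflate $\beta$, so the gluing must be arranged so that the cross-blocks destroy every large independent set, while the probabilistic route requires a rigorous union bound over all candidate large independent sets.
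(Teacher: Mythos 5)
Your opening reduction is exactly the one the paper uses: by Theorem \ref{thm:tau} we have $\gamma(D)\geq\tau(D)=v-\beta(D)$, so it suffices to exhibit, for large admissible $v$, a system with $\beta(D)\leq(1-c)v$. The paper disposes of this in one line by citing Brown and Buhler \cite{BroBuh82} (see also \cite[p.~305]{Triples}) for the existence of $D\in STS(v)$ with $\tau(D)\geq cv$ for all sufficiently large $v$. You instead try to manufacture such systems, and your $AG(n,3)$ argument is sound as far as it goes: independent sets are caps, the Roth--Meshulam (indeed already the Brown--Buhler) bound gives $\beta=O(3^n/n)=o(v)$, and hence $\gamma\geq(1-o(1))v$ along $v=3^n$.

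The genuine gap is the one you flag yourself: the theorem asserts the bound for every sufficiently large admissible $v$, and $v=3^n$ is a vanishingly sparse subsequence of the residues $1,3\pmod 6$. None of your three proposed repairs is carried out, and the first of them is based on a reversed inequality: in any proper coloring the largest color class is an independent set of size at least $v/\chi$, so $\beta(D)\geq v/\chi(D)$, not $\beta(D)\leq v/\chi(D)$. Large chromatic number is a \emph{consequence} of small independence number, not a route to it, so that avenue cannot close the gap. The probabilistic and recursive routes could in principle work (and results of this type are in the literature), but as written they are declarations of intent rather than proofs; in particular, as you note, naive doubling constructions inherit large independent sets from their constituents. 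To match the paper you would either need to cite the known extremal result covering all admissible $v$, or supply a genuine embedding argument that transfers the small-$\beta$ property from $AG(n,3)$ to systems of every admissible order.
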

\begin{proof}
By a result of Brown and Buhler \cite{BroBuh82} (cf. also \cite[p. 305]{Triples}) for a sufficiently large $v$ there exists a $D \in STS(v)$ with $\tau(D) \geq cv$. By Theorem \ref{thm:tau} the conclusion follows.
\end{proof}

\begin{thm}\label{thm:rodl}
There is a constant $C$ so that for all $v \geq v_{0}(C)$, if $D=STS(v)$ then $$\gamma(D) \leq v-C\sqrt{v\log{v}}.$$
\end{thm}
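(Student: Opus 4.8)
The plan is to use the machinery already developed connecting $\gamma$ to the independence number $\beta$, combined with a strong lower bound on $\beta$ for Steiner triple systems. By Corollary~\ref{cor:beta} we have $\gamma(D) \leq v - \frac{\beta(D)}{2}$, so it suffices to exhibit, for every $D \in STS(v)$, an independent set of size at least $2C\sqrt{v\log v}$ for some absolute constant $C$ (the factor of $2$ being absorbed by Corollary~\ref{cor:beta}). Recall that in a Steiner triple system an independent set is precisely a set of points containing no full block, i.e.\ a \emph{cap} or, in the hypergraph language, an independent set of the $3$-uniform hypergraph whose edges are the triples.

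First I would recall that Theorem~\ref{thm:grable} (the $t=2$ case of Grable--Phelps--R\"odl) already gives $\beta(D) \geq c' v^{1/2}(\ln v)^{1/2}$ for $k=3$, which is \emph{exactly} the order $\sqrt{v\log v}$ we need. So the skeleton of the argument is immediate: apply Theorem~\ref{thm:grable} with $k=3$ to get $\beta(D) \geq c'\sqrt{v\ln v}$, then feed this into Corollary~\ref{cor:beta} to obtain
$$
\gamma(D) \leq v - \frac{\beta(D)}{2} \leq v - \frac{c'}{2}\sqrt{v\ln v}.
$$
Setting $C = c'/2$ and $v_0(C)$ large enough that Theorem~\ref{thm:grable} applies yields the claim. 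This is essentially the Steiner-triple specialization of Theorem~\ref{thm:upper}, made quantitative, and it is the cleanest route.

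The only genuine subtlety is bookkeeping about which logarithm appears: Theorem~\ref{thm:grable} is stated with $\ln v$ while the target uses $\log v$, but since these differ by a constant factor the constant $C$ simply absorbs it, so I would remark that $\log$ and $\ln$ are interchangeable here up to the choice of $C$. A secondary point worth stating explicitly is that the phrase ``for all $v \geq v_0(C)$'' matches the ``for large enough $v$'' hypothesis of Theorem~\ref{thm:grable}, so no extra work is needed to handle small cases.

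I do not expect a serious obstacle, since all the hard analytic work is imported from Theorem~\ref{thm:grable}; the proof is genuinely a two-line application of the earlier results. If one wanted a self-contained bound the difficulty would migrate to reproving the independence-number estimate for the triple hypergraph (a nibble/deletion or semi-random argument), but given that Theorem~\ref{thm:grable} is already available in the excerpt, the main task is merely to verify that the exponents $\frac{k-2}{k-1}$ and $\frac{1}{k-1}$ both equal $\frac12$ when $k=3$, which they do, and to track the constants through Corollary~\ref{cor:beta}.
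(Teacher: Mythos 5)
Your proof is correct and is essentially the paper's own argument: the paper proves this theorem by specializing Theorem~\ref{thm:upper} to $k=3$, $\lambda=1$, and Theorem~\ref{thm:upper} is itself exactly the combination of Corollary~\ref{cor:beta} with the Grable--Phelps--R\"odl bound (Theorem~\ref{thm:grable}) that you spell out. Your version simply unpacks that one-line citation and tracks the constant explicitly, which is a harmless (and slightly more transparent) presentation of the same route.
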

\begin{proof}
Put $k=3$ and $\lambda=1$ in Theorem \ref{thm:upper}.
\end{proof}

By the result of \cite{RodSin94} Theorem \ref{thm:rodl} is, in fact, best possible.

We can also record an upper bound which is not as sharp asymptotically as Theorem \ref{thm:rodl} but has the attractive advantage of an explicit expression.

\begin{thm}
Let $D \in STS(v)$. Then
$$
\gamma(D) \leq v-\sqrt{\frac{v}{2}} .
$$
\end{thm}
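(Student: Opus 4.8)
The goal is to show that any $D \in STS(v)$ satisfies $\gamma(D) \leq v - \sqrt{v/2}$. The plan is to exhibit an explicit independent set of points in $D$ whose size is at least $\sqrt{v/2}$, and then invoke Corollary~\ref{cor:beta}, which states $\gamma \leq v - \beta/2$. Thus it suffices to prove the lower bound $\beta(D) \geq \sqrt{2v}$, since then $v - \beta/2 \leq v - \sqrt{2v}/2 = v - \sqrt{v/2}$.

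First I would construct a large independent set greedily. Recall that a set of points $S$ is independent precisely when no block of $D$ lies entirely inside $S$; since $k=3$, this means $S$ contains no complete triple. The plan is to build $S$ one point at a time: start with $S = \emptyset$ and repeatedly add any point $x$ that does not complete a triple together with two points already in $S$. A point $x$ is forbidden only if there exist $y,z \in S$ with $\{x,y,z\}$ a block; each pair $\{y,z\} \subseteq S$ lies in exactly one block (because $\lambda = 1$), which forbids at most one additional point. Hence, when $|S| = m$, the number of points forbidden from being added is at most $\binom{m}{2}$, so the process can only halt once every remaining point is forbidden, i.e. once $v - m \leq \binom{m}{2}$.

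The key step is then the counting estimate. When the greedy procedure terminates with $|S| = m$, we have $v \leq m + \binom{m}{2} = m(m+1)/2 \leq (m+1)^2/2$, which rearranges to $m + 1 \geq \sqrt{2v}$, and a slightly more careful bound (using $m(m+1)/2 \le m^2$ for $m \ge 1$, or simply $v \le m(m+1)/2 < (m+1)^2/2$) gives $\beta(D) \geq m \geq \sqrt{2v} - 1$. To land the clean stated bound I would verify that $v - \beta/2 \leq v - \sqrt{v/2}$ follows from $\beta \geq \sqrt{2v}$; the $-1$ slack from the greedy argument can be absorbed by a marginally sharper termination inequality (observing that at termination \emph{each} of the $v-m$ unchosen points must be the unique completion of some pair within $S$, and that distinct unchosen points may share pairs only through distinct blocks) or by noting the floor in Corollary~\ref{cor:beta} leaves room.

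The main obstacle I anticipate is tightening the greedy count enough to yield exactly $\beta \geq \sqrt{2v}$ rather than $\sqrt{2v} - O(1)$, so that the final inequality reads precisely $v - \sqrt{v/2}$ with no lower-order correction term. This is a purely arithmetic matter of being careful about whether the bound $v - m \leq \binom{m}{2}$ is strict and how the ceiling or floor in the surrounding inequalities propagates; I would handle it by choosing the termination condition as the first $m$ with $\binom{m}{2} \geq v - m$ and checking the boundary case directly. Everything else—the greedy construction, the $\lambda = 1$ uniqueness that caps each pair's contribution at one forbidden point, and the appeal to Corollary~\ref{cor:beta}—is routine.
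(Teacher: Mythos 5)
Your overall skeleton is the same as the paper's: reduce the statement to the independence bound $\beta(D)\geq\sqrt{2v}$ and then apply Corollary~\ref{cor:beta}. The paper does exactly this, but it obtains $\beta(D)\geq\sqrt{2v}$ by citing the theorem of Erd\H{o}s and Hajnal, whereas you try to reprove that bound by a greedy/maximality argument. That is where the gap lies: the counting you describe only yields $v-m\leq\binom{m}{2}$ for a maximal independent set of size $m$, i.e.\ $m(m+1)\geq 2v$, which gives $m\geq\frac{1}{2}(\sqrt{8v+1}-1)\approx\sqrt{2v}-\frac{1}{2}$, strictly weaker than $\sqrt{2v}$. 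This deficit is not an artifact of sloppiness and cannot be absorbed the way you suggest. Your ``sharper termination inequality'' (each unchosen point is the unique completion of some pair in $S$, distinct unchosen points using distinct pairs) is precisely the injection that gives $v-m\leq\binom{m}{2}$ again, so it buys nothing new; and Corollary~\ref{cor:beta} contains no floor, so there is no rounding slack to exploit there. Concretely, for $v=55$ (a legitimate STS order, $55\equiv1\pmod 6$) the inequality $v-m\leq\binom{m}{2}$ is already satisfied at $m=10$, so the greedy argument only guarantees $\beta\geq10$ and hence $\gamma\leq 55-5=50$, while the theorem asserts $\gamma\leq 55-\sqrt{27.5}<49.8$, i.e.\ $\gamma\leq 49$. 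So your argument, as it stands, proves a genuinely weaker statement.

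What your approach does buy is a self-contained, elementary proof of the slightly weaker bound $\gamma(D)\leq v-\frac{1}{4}(\sqrt{8v+1}-1)$, with no external citation. To recover the clean constant $\sqrt{v/2}$ in the stated theorem you either need to import the Erd\H{o}s--Hajnal bound $\beta\geq\sqrt{2v}$ as the paper does, or supply a genuinely sharper independence argument than the one-pair-per-forbidden-point count; simply being more careful with ceilings and termination conditions in the greedy process will not close the additive gap.
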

\begin{proof}
By a result of Erd{\"o}s and Hajnal \cite{ErdHaj66} (cf. also \cite[p. 305]{Triples}) we have that $\beta(D) \geq \sqrt{2v}$. The conclusion follows from Corollary \ref{cor:beta}.
\end{proof}

As a possible approach to Conjecture \ref{conj:sts} we pose 
\begin{qstn}
Let $D$ be an $STS(v)$ with a Pasch configuration. Let $D^{'}$ be an $STS(v)$ obtained from $D$ by a Pasch trade. Does $\gamma(D)=\gamma(D^{'})$ hold?
\end{qstn}

\section{Symmetric designs}\label{sec:symmetric}
The famous and basic inequality of Fisher (cf. \cite[p. 5]{Links}) asserts that
$$
b \geq v.
$$

Designs in which $b=v$ are called \emph{symmetric}. Note that $b=v$ is equivalent to $r=k$. Recall now that the projective planes are precisely those symmetric designs which have $\lambda=1$. If $\Pi$ is a finite projective plane, we can say that $\Pi$ is a symmetric $(k^{2}-k+1,k,1)$-design. Theorem \ref{thm:pp} thus asserts that $\gamma(\Pi)=2(k-1)$. We are now going to obtain a generalization of this fact for all symmetric designs.

\begin{lem}\cite[p. 5]{Links}\label{lem:sym}
Let $D$ be a symmetric $(v,k,\lambda)$-design. Every two blocks of $D$ intersect in exactly $\lambda$ points.
\end{lem}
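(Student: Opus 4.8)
The final statement to prove is Lemma \ref{lem:sym}: in a symmetric $(v,k,\lambda)$-design, any two blocks meet in exactly $\lambda$ points. Let me sketch a proof plan.

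The plan is to exploit the duality inherent in symmetric designs, where $b=v$ (equivalently $r=k$). The cleanest route is through linear algebra using the incidence matrix $N$, the $v \times b$ matrix with $N_{ij}=1$ if point $x_i$ lies in block $B_j$ and $0$ otherwise. The defining property of a $(v,k,\lambda)$-design translates to the matrix identity $NN^{\top} = (r-\lambda)I + \lambda J$, where $J$ is the all-ones matrix; the off-diagonal entries count blocks through two points ($\lambda$) and the diagonal entries count blocks through one point ($r$). Since the design is symmetric, $N$ is square ($v \times v$) and $r=k$.

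First I would establish that $N$ is nonsingular. Computing the determinant of $NN^{\top}=(r-\lambda)I+\lambda J$ via its eigenvalues gives $\det(NN^{\top})=(r-\lambda)^{v-1}(r+(v-1)\lambda)=(r-\lambda)^{v-1}rk$, which is nonzero provided $r>\lambda$ (guaranteed for a non-trivial design). Hence $N$ is invertible. Next, since $N$ is square and $NJ = rJ = kJ$ while $JN = kJ$ (each block has $k$ points, each point in $r=k$ blocks), one checks that $N$ commutes with $J$, and then $N^{\top}$ commutes with $NN^{\top}$. The key step is to deduce $N^{\top}N = NN^{\top}$: starting from $N(N^{\top}N)=(NN^{\top})N = ((r-\lambda)I+\lambda J)N$ and using $JN=kJ=rJ=NJ$, I would show $N^{\top}N$ and $NN^{\top}$ satisfy the same relation, then cancel the invertible $N$ to conclude $N^{\top}N=(k-\lambda)I+\lambda J$ as well. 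Reading off the off-diagonal entries of $N^{\top}N$, which count the points common to two distinct blocks, yields exactly $\lambda$.

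The main obstacle is the step establishing $N^{\top}N = NN^{\top}$; one must handle the interaction with $J$ carefully, since the cancellation of $N$ is only justified once invertibility is in hand, and it is precisely symmetry ($r=k$, so $NJ=JN$) that makes $N$ commute with $J$ and lets the argument close. An alternative, more combinatorial approach would fix a block $B$ and double-count incidences between the points of $B$ and the remaining blocks, but this tends to require knowing $b=v$ in a less transparent way and ultimately reduces to the same counting; I would prefer the matrix argument for its brevity. Throughout I would rely only on $r=\lambda\frac{v-1}{k-1}$ and $r=k$ from the symmetric hypothesis, both available in the preceding text.
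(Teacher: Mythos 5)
Your proposal is correct: the incidence-matrix argument you sketch (establish $NN^{\top}=(r-\lambda)I+\lambda J$, show $\det NN^{\top}=rk(r-\lambda)^{v-1}\neq 0$ using $r+( v-1)\lambda=rk$ and $r>\lambda$ for non-trivial designs, use $NJ=JN=kJ$ to get $N(N^{\top}N)=N\bigl((k-\lambda)I+\lambda J\bigr)$ and cancel the invertible $N$) is the classical proof of this fact, and the determinant and commutation computations you indicate all check out. The paper itself supplies no proof here --- it cites the result from its reference on symmetric designs --- and the standard source proof is exactly the one you give, so there is no divergence of approach to report; the only point worth making explicit in a write-up is that the cancellation step needs $r>\lambda$, i.e.\ $k<v$, which you correctly note is the non-triviality hypothesis.
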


Therefore, every block is a transversal and we have $\tau \leq k$ for symmetric designs. Using Theorem \ref{thm:trans2} we can then immediately deduce that $\gamma(D) \leq 2k$ but it is possible to obtain a stronger result as we shall now see. To derive it we shall need a slight strengthening of Lov\'{a}sz's generalized Helly theorem. Our next lemma is a special case of an argument given in \cite[13.25(b)]{Lovasz}. We reproduce the proof for the reader's convenience.
\begin{lem}\label{lem:helly}
Let $\mathcal{F}$ be a $k$-uniform intersecting family of sets. There is a distinct pair of sets $A_{1},A_{2} \in \mathcal{F}$ such that:
$$
|A_{1} \cap A_{2}| \leq k-(\tau(\mathcal{F})-1).
$$
\end{lem}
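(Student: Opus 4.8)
The plan is to argue by contradiction, showing that if no such pair existed then one could exhibit a transversal smaller than $\tau(\mathcal{F})$. Write $\tau = \tau(\mathcal{F})$. First I would record the trivial but crucial observation that, because $\mathcal{F}$ is intersecting, every single member $A \in \mathcal{F}$ already meets all the others (and meets itself), so $A$ is a transversal and hence $\tau \leq k$. This also disposes of the degenerate cases: if $\tau \leq 1$ the asserted bound reads $|A_1 \cap A_2| \leq k$, which holds for any distinct pair as soon as $|\mathcal{F}| \geq 2$.

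Assume then that $\tau \geq 2$ and, for contradiction, that every distinct pair $A, A' \in \mathcal{F}$ satisfies $|A \cap A'| \geq k - \tau + 2$. I would fix an arbitrary $A_1 \in \mathcal{F}$ and select any subset $Y \subseteq A_1$ with $|Y| = \tau - 1$; this is possible since $|A_1| = k \geq \tau > \tau - 1$. The key step is to verify that $Y$ is itself a transversal of $\mathcal{F}$. For $A_1$ this is immediate, as $Y \subseteq A_1$ is nonempty. For any other $A \in \mathcal{F}$, the contradiction hypothesis gives $|A \cap A_1| \geq k - \tau + 2$, whereas $|A_1 \setminus Y| = k - (\tau - 1) = k - \tau + 1$; since the former strictly exceeds the latter, $A \cap A_1$ cannot be contained in $A_1 \setminus Y$, so $A$ must meet $Y$.

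Thus $Y$ is a transversal of cardinality $\tau - 1 < \tau$, contradicting the minimality in the definition of $\tau(\mathcal{F})$. Hence some distinct pair $A_1, A_2$ must satisfy $|A_1 \cap A_2| \leq k - \tau + 1 = k - (\tau - 1)$, which is exactly the claim.

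I expect the only real subtlety to be the bookkeeping at the boundary — making sure $A_1$ itself is covered by $Y$, and that the size $\tau - 1$ is both meaningful and realizable inside a block — rather than any deep difficulty. The substantive idea is simply that the intersecting property allows the search for a transversal to be confined entirely to one chosen block $A_1$, after which a one-line pigeonhole comparison between $|A \cap A_1|$ and $|A_1 \setminus Y|$ does all the work.
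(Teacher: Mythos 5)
Your argument is correct and is essentially the paper's own proof run in the contrapositive: both fix an arbitrary block $A_1$, take a subset $Y \subseteq A_1$ of size $\tau(\mathcal{F})-1$, and exploit the fact that $Y$ cannot be a transversal to produce a block meeting $A_1$ in at most $|A_1 \setminus Y| = k-(\tau(\mathcal{F})-1)$ points. Your explicit treatment of the degenerate case $\tau \leq 1$ is a minor extra care the paper omits, but the substance is identical.
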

\begin{proof}
Choose $A_{1}$ arbitrarily. Clearly $|A_{1}| \geq \tau(\mathcal{F})$ by the intersection property. Choose a subset $Y \subseteq A_{1}$ such that $|Y|=\tau(\mathcal{F})-1$. As $Y$ is not a transversal, there is some set $A_{j}$ so that $Y \cap A_{j}=\emptyset$. Thus
\begin{equation}\label{eq:lov}
|A_{i} \cap A_{j}| \leq |A_{1}\setminus Y|=k-(\tau(\mathcal{F})-1).
\end{equation}
\end{proof}
\begin{lem}\label{lem:symdes}
Let $D$ be a symmetric $(v,k,\lambda)$-design. Then $$\tau(D)\leq k-\lambda+1.$$
\end{lem}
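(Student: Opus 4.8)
The plan is to apply the strengthened Helly-type lemma (Lemma~\ref{lem:helly}) directly to the family of blocks of $D$. First I would observe that the block set of a symmetric design constitutes a $k$-uniform intersecting family: every block has exactly $k$ points, and by Lemma~\ref{lem:sym} any two distinct blocks meet in exactly $\lambda$ points, which is positive thanks to our standing assumption $\lambda > 0$. Hence the hypotheses of Lemma~\ref{lem:helly} are satisfied with $\mathcal{F} = D$, and the transversal number $\tau(\mathcal{F})$ of this family is precisely $\tau(D)$.

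Second, I would invoke Lemma~\ref{lem:helly} to produce a distinct pair of blocks $A_{1},A_{2} \in D$ with $|A_{1} \cap A_{2}| \leq k-(\tau(D)-1)$. The decisive step is then to confront this inequality with the exact intersection size furnished by Lemma~\ref{lem:sym}, namely $|A_{1} \cap A_{2}| = \lambda$. Substituting yields $\lambda \leq k - \tau(D) + 1$, and rearranging gives the claimed bound $\tau(D) \leq k - \lambda + 1$.

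The argument is short and the two lemmas combine cleanly, so I do not anticipate a genuine obstacle; the only point needing care is to confirm that the quantity $\tau(\mathcal{F})$ appearing in Lemma~\ref{lem:helly}, defined relative to the block family viewed as a hypergraph, is literally the transversal number $\tau(D)$ of the design. I would also remark that for $\lambda = 1$ the bound reduces to $\tau \leq k$, matching the elementary observation that every block is a transversal; the substance of the lemma therefore lies in the strict improvement it provides whenever $\lambda \geq 2$.
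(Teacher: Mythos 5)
Your proposal is correct and follows essentially the same route as the paper: apply Lemma~\ref{lem:helly} to the block family (a $k$-uniform intersecting family by Lemma~\ref{lem:sym} and $\lambda>0$) and combine the resulting inequality with the exact intersection size $\lambda$. You spell out the verification of the hypotheses of Lemma~\ref{lem:helly} more explicitly than the paper does, but the argument is the same.
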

\begin{proof}
Choose blocks $A_{1},A_{2}$ satisfying \eqref{eq:lov}. We have:
$$
\lambda=|A_{1} \cap A_{2}| \leq k-(\tau(D)-1).
$$
\end{proof}

If $D$ is a symmetric design, then the \emph{dual design} $D^{T}$ is obtained by switching the roles of the points and blocks of $D$. $D^{T}$ is also a symmetric design with the same parameters (cf. \cite[p. 5]{Links}) thought it is not necessarily isomorphic to $D$.

\begin{thm}\label{thm:sym}
Let $D$ be a symmetric $(v,k,\lambda)$-design. Then $$\gamma(D)\leq 2(k-\lambda+1).$$
\end{thm}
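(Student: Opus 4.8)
The plan is to build an explicit dominating set as the union of two sets, one consisting of points and one consisting of blocks, each of size at most $k-\lambda+1$, exploiting the self-duality of symmetric designs to bound both halves by the same quantity. The guiding inequality I would aim for is $\gamma(D) \leq \tau(D) + \tau(D^{T})$, which, combined with Lemma \ref{lem:symdes} applied to both $D$ and its dual, gives the claim.

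First I would recall that the dual design $D^{T}$ is again a symmetric $(v,k,\lambda)$-design, so Lemma \ref{lem:symdes} applies to it and yields $\tau(D^{T}) \leq k-\lambda+1$ in addition to $\tau(D) \leq k-\lambda+1$. The reason for introducing $D^{T}$ is that a transversal of $D^{T}$ translates, under the duality, into a family $\mathcal{L}$ of blocks of $D$ such that every point of $D$ lies in some member of $\mathcal{L}$ — that is, a covering of the points by blocks. One can even write such an $\mathcal{L}$ down directly: fix a point $p$ and take the $k$ blocks through $p$, then discard any $\lambda-1$ of them; since any other point $x$ lies on exactly $\lambda$ blocks through $p$, at least one of these survives, so the remaining $k-\lambda+1$ blocks still cover every point.

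Next I would verify that pairing a point-transversal with a block-cover produces domination. Let $T$ be a transversal of $D$ with $|T| \leq k-\lambda+1$ (for instance a block with $\lambda-1$ of its points deleted, which still meets every other block because two blocks share exactly $\lambda$ points by Lemma \ref{lem:sym}), and let $\mathcal{L}$ be a block-cover as above. Then $S = T \cup \mathcal{L}$ dominates $G_{D}$: every block-vertex is dominated because $T$, being a transversal, contributes one of its points to each block, and every point-vertex is dominated because $\mathcal{L}$, being a cover, places each point inside one of its blocks. Hence $\gamma(D) \leq |T| + |\mathcal{L}| \leq \tau(D) + \tau(D^{T}) \leq 2(k-\lambda+1)$.

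The genuinely delicate step is the translation in the second paragraph: one must confirm that a transversal of the dual really does correspond to a set of blocks covering all points of $D$, and that the two directions of domination — blocks covered by points of $T$, points covered by blocks of $\mathcal{L}$ — together exhaust every vertex of the bipartite graph $G_{D}$. Everything else is bookkeeping, since the size bound $k-\lambda+1$ is handed to us by Lemma \ref{lem:symdes} for both $D$ and $D^{T}$. I would note in passing that this estimate is not tight for projective planes (taking $\lambda=1$ gives $2k$, whereas Theorem \ref{thm:pp} yields the true value $2(k-1)$), so the construction deliberately sacrifices sharpness for a bound uniform in $\lambda$.
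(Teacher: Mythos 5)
Your proposal is correct and follows essentially the same route as the paper: apply Lemma \ref{lem:symdes} to $D$ to get a small point-transversal dominating all blocks, apply it to the dual $D^{T}$ to get a small block-cover dominating all points, and take the union. The explicit constructions you add (a block minus $\lambda-1$ points, and the pencil of blocks through a point minus $\lambda-1$ of them) are a nice concrete supplement but not a different argument.
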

\begin{proof}
By Lemma \ref{lem:symdes} there is a set of $k-\lambda+1$ points which dominates all blocks. On the other hand, applying Lemma \ref{lem:symdes} to $D^{T}$ we infer that there is a set of $k-\lambda+1$ blocks which dominates all points. The union of these two sets is a dominating set in $G_{D}$.
\end{proof}

\begin{cor}\label{cor:sym}
Let $D$ be a symmetric $(v,k,\lambda)$-design. Then $$\gamma(D) \leq 2(k-1).$$
\end{cor}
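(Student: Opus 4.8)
The plan is to derive this immediately from Theorem~\ref{thm:sym} by splitting on the value of $\lambda$. Since the paper assumes $\lambda > 0$ throughout, every design under consideration has $\lambda \geq 1$. Theorem~\ref{thm:sym} supplies the bound $\gamma(D) \leq 2(k-\lambda+1)$, and the quantity $2(k-\lambda+1)$ is at most $2(k-1)$ exactly when $\lambda \geq 2$. Hence the bulk of the statement is a one-line consequence: for symmetric designs with $\lambda \geq 2$ we have
$$
\gamma(D) \leq 2(k-\lambda+1) \leq 2(k-1),
$$
and the desired inequality follows.

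The only case that Theorem~\ref{thm:sym} does not settle on its own is $\lambda = 1$, where it yields merely $\gamma(D) \leq 2k$. But the symmetric designs with $\lambda = 1$ are precisely the finite projective planes, as recalled at the start of this section. Writing such a plane $\Pi$ of order $q$ as a symmetric $(q^2+q+1,\,q+1,\,1)$-design, we have $k = q+1$, so $k-1 = q$, and Theorem~\ref{thm:pp} gives the exact value $\gamma(\Pi) = 2q = 2(k-1)$. Thus the bound $2(k-1)$ holds in this remaining case as well, in fact with equality.

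I expect no genuine obstacle here; the corollary is a routine bookkeeping consequence of two results already established. The one point deserving care is the mismatch at $\lambda = 1$: Theorem~\ref{thm:sym} alone overshoots by $2$ in that case, so it would be a mistake to claim the corollary follows from Theorem~\ref{thm:sym} uniformly in $\lambda$. The correct structure is that Theorem~\ref{thm:sym} disposes of $\lambda \geq 2$, while the projective-plane computation of Theorem~\ref{thm:pp} is needed to cover $\lambda = 1$.
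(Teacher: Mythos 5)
Your proposal is correct and follows exactly the paper's own argument: the paper likewise invokes Theorem~\ref{thm:sym} for $\lambda>1$ and Theorem~\ref{thm:pp} for the projective-plane case $\lambda=1$. Your explicit remark that Theorem~\ref{thm:sym} alone overshoots by $2$ when $\lambda=1$ is a helpful clarification of why the case split is necessary.
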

\begin{proof}
For $\lambda=1$ use Theorem \ref{thm:pp} and for $\lambda>1$ use Theorem \ref{thm:sym}.
\end{proof}

To conclude this section, we present a conjecture.
\begin{conj}\label{conj:biplane}
Let $D$ be a symmetric $(v,k,2)$-design with $k \geq 4$. Then $$\gamma(D)=k.$$
\end{conj}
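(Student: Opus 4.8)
The plan is to prove the two inequalities $\gamma(D) \ge k$ and $\gamma(D) \le k$ separately. The lower bound is within reach of the tools already developed; the upper bound is the genuine difficulty.

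For the lower bound I would argue as follows. Recall that for a symmetric design $r = k$, and that the biplane relation $k(k-1) = \lambda(v-1)$ forces $v = \tfrac{k(k-1)}{2} + 1$. Let $S$ be a minimum dominating set, write $P = \pi(S)$, $p = |P|$, and let $\ell = |S| - p$ be the number of blocks in $S$. Two counting facts drive the argument. First, Lemma~\ref{lem:struc} gives the covering bound $\gamma(D) \ge \tfrac{v + p(k-1)}{k}$. Second, double-counting incidences between $P$ and the blocks yields $\sum_B |B\cap P| = pk$ and $\sum_B \binom{|B\cap P|}{2} = \binom{p}{2}\lambda = p(p-1)$; since $|B \cap P| \le p$ one obtains $|L(P)| \le p(k-2) + 2$, while every block outside $S$ must meet $P$ by Lemma~\ref{lem:hat}, so $|L(P)| \ge v - \ell$. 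Assume for contradiction that $\gamma(D) \le k - 1$. The covering bound then forces $p < k/2$, whereas combining the two domination inequalities gives $\gamma(D) \ge v - p(k-3) - 2$ and hence $p \ge k/2$; this contradiction establishes $\gamma(D) \ge k$ for every $k \ge 4$. (The fractional bound of Theorem~\ref{thm:frac} alone yields only $\gamma(D) \ge k-1$, so the combinatorial refinement above is what closes the last unit.)

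For the upper bound the task is to exhibit a dominating set of size exactly $k$. Since any $S$ is partitioned into its points $P$ and its blocks $\mathcal{L}$, I would search for $\ell$ blocks $\mathcal{L}$ together with $k-\ell$ points $P$ such that (i) every point lies in $P$ or in some block of $\mathcal{L}$, and (ii) every block lies in $\mathcal{L}$ or contains a point of $P$. The lower-bound analysis already pins the profile of any such set: the same inequalities confine $p$ to the narrow window $\tfrac{k-2}{2} \le p \le \tfrac{k+2}{2}$, so one is forced to use roughly $k/2$ blocks and $k/2$ points. The natural candidate for $\mathcal{L}$ is a bundle of blocks through a common point, which by Lemma~\ref{lem:sym} covers all but a controlled set of points; one would then take $P$ to be the uncovered points and hope that they form a transversal of the remaining blocks.

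The hard part will be reconciling the two demands on $\mathcal{L}$ and $P$. Because every two blocks of a symmetric design meet in $\lambda = 2$ points (Lemma~\ref{lem:sym}), the $\approx k/2$ chosen blocks overlap heavily, so their union is substantially smaller than $\ell k$; making this union large enough to leave at most $k-\ell$ uncovered points requires concentrating the forced pairwise overlaps onto very few points, and \emph{simultaneously} those few uncovered points must meet essentially all of the remaining blocks. Controlling this covering-versus-transversal tension---equivalently, showing that a biplane always contains $\approx k/2$ blocks whose overlaps are maximally concentrated and whose complementary points dominate the rest---is the crux. Here the structural particulars of the specific biplane, rather than merely its parameters, appear to be unavoidable, which is precisely why the bound $2(k-1)$ of Corollary~\ref{cor:sym} is so far from the conjectured value $k$ and why the upper bound resists a purely parametric proof.
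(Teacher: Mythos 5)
The statement you are trying to prove is stated in the paper as an open conjecture (Conjecture~\ref{conj:biplane}); the paper offers no proof of it, and the strongest result it actually establishes for biplanes is the upper bound $\gamma(D)\le 2(k-1)$ of Corollary~\ref{cor:sym} together with the fractional lower bound of Theorem~\ref{thm:frac}. So there is no ``paper proof'' to match your argument against, and any complete proof would have to go beyond the paper's toolkit.

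Your lower-bound half does exactly that, and as far as I can check it is sound. The two counting identities $\sum_B|B\cap P|=pk$ and $\sum_B\binom{|B\cap P|}{2}=\lambda\binom{p}{2}=p(p-1)$ are correct, and since $\binom{i}{2}\le\frac{p}{2}(i-1)$ for $1\le i\le p$ one gets $\sum_{B}(|B\cap P|-1)\ge 2(p-1)$, hence $|L(P)|\le pk-2(p-1)=p(k-2)+2$ as you claim; combined with $|L(P)|\ge v-\ell$ from Lemma~\ref{lem:hat} and with Lemma~\ref{lem:struc}, the assumption $\gamma\le k-1$ does force both $p<k/2$ and $p\ge k/2$ (using $v=\tfrac{k(k-1)}{2}+1$ and $k\ge 4$), a genuine contradiction. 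This closes the one-unit gap left by the fractional bound, which gives only $\lceil k-2+\tfrac{4}{k+1}\rceil=k-1$, and is a real contribution not in the paper. The genuine gap is the entire upper-bound half: what you provide there is a description of a candidate construction (a pencil of roughly $k/2$ blocks plus the uncovered points) and an accurate diagnosis of why it is hard to make the overlaps concentrate while keeping the leftover points a transversal, but no argument that such a configuration exists in every biplane. Since the conjectured statement is an equality, the proposal as it stands proves only $\gamma(D)\ge k$, and the conjecture itself remains open; you should present your lower bound as a partial result rather than as a proof of the full statement.
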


\section{Some more questions for further research}



\begin{prob}
Understand the behaviour of $\gamma$  under the operation of forming a residual or a derived design.
\end{prob}

\begin{conj}
If $D_{1}$ is a residual design of $D$, then $\gamma(D_{1})=\gamma(D)-1$.
\end{conj}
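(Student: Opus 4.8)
The plan is to read the conjecture through the incidence graph and split it into two matching inequalities. Since a residual design is only defined when $D$ is symmetric, I would assume throughout that $D$ is a symmetric $(v,k,\lambda)$-design, fix a block $B$, and let $D_{1}=D^{B}$ be the residual obtained by deleting $B$: its points are $X\setminus B$ and its blocks are the sets $B'\setminus B$ for $B'\neq B$. The first thing to record is that, because a point $p\notin B$ lies in $B'\setminus B$ exactly when $p\in B'$, the incidence graph $G_{D_{1}}$ is precisely the subgraph of $G_{D}$ induced on $(X\setminus B)\cup(\mathbb{B}\setminus\{B\})$; that is, $G_{D_{1}}$ is obtained from $G_{D}$ by deleting the single block-vertex $B$ together with the $k$ point-vertices of $B$. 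With this dictionary the conjecture becomes the pair of inequalities $\gamma(D)\leq\gamma(D_{1})+1$ and $\gamma(D_{1})\leq\gamma(D)-1$.

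The first inequality is the easy half and holds for every choice of $B$. Let $S_{1}$ be a minimum dominating set of $G_{D_{1}}$ and consider $S_{1}\cup\{B\}$ in $G_{D}$. Every surviving vertex is already dominated inside $G_{D_{1}}$ by $S_{1}$, and since $G_{D_{1}}$ is an induced subgraph the same adjacencies persist in $G_{D}$; the block $B$ dominates itself and all $k$ points of $B$. Hence $S_{1}\cup\{B\}$ dominates $G_{D}$ and $\gamma(D)\leq\gamma(D_{1})+1$.

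For the reverse inequality the idea is to start from a minimum dominating set $S$ of $G_{D}$ and \emph{delete} exactly the block used to form the residual. The key observation is that if $B$ can be chosen with $B\cap\pi(S)=\emptyset$, then no damage is done. Indeed, by Lemma \ref{lem:hat} any block disjoint from $\pi(S)$ already lies in $S$, so such a $B$ satisfies $B\in S$ and $S\setminus\{B\}\subseteq V(G_{D_{1}})$. A surviving point $p\notin B$ is dominated in $G_{D}$ either by itself or by a block $B'\neq B$ with $p\in B'\setminus B$, so it remains dominated in $G_{D_{1}}$; and a surviving block $B''\notin S$ is dominated by some point $q\in B''\cap\pi(S)$, which by $B\cap\pi(S)=\emptyset$ lies in $X\setminus B$ and hence survives. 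Thus $S\setminus\{B\}$ dominates $G_{D_{1}}$ and $\gamma(D_{1})\leq|S|-1=\gamma(D)-1$. The whole reverse inequality therefore reduces to producing a minimum dominating set $S$ whose point-set $\pi(S)$ is \emph{not} a transversal, equivalently with $\widehat{L}(\pi(S))\neq\emptyset$; one then takes $B\in\widehat{L}(\pi(S))$. A \emph{neat} minimum dominating set $S=I_{P}=P\cup\widehat{L}(P)$ supplies exactly this: its block-part is $\widehat{L}(P)$, and since a point-only set can dominate $G_{D}$ only by containing all $v$ points, any neat minimum set of size $\gamma(D)<v$ must have $\widehat{L}(P)\neq\emptyset$ (the bound $\gamma(D)\leq 2(k-1)$ of Corollary \ref{cor:sym} guarantees $\gamma(D)<v$ in the relevant range, e.g. for biplanes with $k\geq 4$).

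The main obstacle is therefore not the domination bookkeeping but the \emph{neatness} of symmetric designs. Since Lemma \ref{lem:sym} forces distinct blocks to meet in $\lambda<k$ points, symmetric designs are simple, so the reverse inequality would follow from the (as yet unproved) conjecture that simple designs are neat; establishing that special case is where I expect the real work to lie. Two secondary points must also be addressed. First, the argument as given produces a good residual only for the particular block $B$ extracted from a neat minimum dominating set, so to reach the statement for \emph{every} residual one would additionally invoke block-transitivity of $D$ (as for Desarguesian planes) or a residual analogue of Conjecture \ref{conj:sts} asserting that $\gamma(D^{B})$ is independent of $B$. Second, when $k\leq 2\lambda$ the residual $D_{1}$ may acquire repeated blocks; these appear as distinct vertices of $G_{D_{1}}$ with equal neighbourhoods, and one must check they do not spoil domination, though the induced-subgraph description above already accommodates them. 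Failing neatness, one can attempt the direct route of restricting an arbitrary minimum dominating set and re-dominating the blocks orphaned by the deleted points of $B$; there the crux becomes an accounting inequality, namely bounding the number of patch vertices by $|\pi(S)\cap B|$, which I expect to be the genuine difficulty in the non-neat case.
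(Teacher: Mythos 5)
The statement you are addressing is one of the paper's open conjectures; the paper offers no proof of it, so there is nothing to compare your argument against except the conjecture itself. Judged on its own terms, your proposal is a clean and correct \emph{reduction}, but not a proof. The parts that do work are worth recording: the identification of $G_{D_{1}}$ with the subgraph of $G_{D}$ induced on $(X\setminus B)\cup(\mathbb{B}\setminus\{B\})$ is right, and your proof of the easy half $\gamma(D)\leq\gamma(D_{1})+1$ is complete and valid for every block $B$. Likewise, your observation that if $S$ is a minimum dominating set of $G_{D}$ and $B\in\widehat{L}(\pi(S))$, then $B\in S$ by Lemma \ref{lem:hat} and $S\setminus\{B\}$ dominates $G_{D_{1}}$, is correct as stated.

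The genuine gap is exactly where you place it, and it is not a small one: the entire hard direction $\gamma(D_{1})\leq\gamma(D)-1$ hinges on producing, \emph{for the given block} $B$, a minimum dominating set $S$ of $G_{D}$ with $B\cap\pi(S)=\emptyset$. You offer two routes to this --- the neatness of simple (hence symmetric) designs, and block-transitivity or a ``$\gamma(D^{B})$ is independent of $B$'' principle --- but the first is itself an open conjecture of the paper and the second is false for general symmetric designs and unproved as a statement about $\gamma$. Note also that neatness alone is weaker than what you need: a neat minimum dominating set $I_{P}$ gives you \emph{some} block in $\widehat{L}(P)$, not an arbitrary prescribed one, and there is no a priori reason every block of $D$ lies in $\widehat{L}(P)$ for some neat minimum dominating set. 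So even granting both auxiliary conjectures, a further argument is needed to cover every residual. As it stands, your proposal establishes unconditionally only $\gamma(D_{1})\geq\gamma(D)-1$, and conditionally (on unproven hypotheses) the reverse inequality for at least one choice of $B$; the conjecture remains open.
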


\begin{qstn}
It is NP-Hard to compute $\gamma$ even for bipartite graphs (cf. \cite{Bert84}). But is there a polynomial algorithm for determining $\gamma(G_{D})$?
\end{qstn}


\section{Acknowledgments}
We wish to thank Professor Peter Dukes for an illuminating suggestion which has led us to Theorem \ref{thm:linear}, Professor Gordon Royle for sharing the calculations for $STS(15)$. We also thank Dr. Irith Hartman for a careful reading of a draft of the paper and Miss Miriam Farber for a discussion of Conjecture \ref{conj:biplane}.

\bibliographystyle{abbrv}
\bibliography{nuim}
\end{document}